\newtheorem{teorema}{Theorem}[section]
\newtheorem*{teo}{Theorem}
\newtheorem*{coro*}{Corollary}
\newtheorem*{propo}{Proposition}
\newtheorem*{thm}{Theorem}
\newtheorem{coro}[teorema]{Corollary}
\newtheorem{lema}[teorema]{Lemma}
\newenvironment{demode}
  {\noindent {{\it Proof of }}}%
  {\par \hfill \fbox{}}
\newtheorem*{defiRO}{Definition (Class ($\mathcal{RO}$))}
\theoremstyle{definition}
\newtheorem{ej}[teorema]{Example}
\newtheorem{defi}[teorema]{Definition}
\newcommand{\pe}[1]{\langle#1\rangle}
\newcommand{\EL}{\mathcal{L}}
\newcommand{\C}{\mathbb{C}}
\newcommand{\N}{\mathbb{N}}
\newcommand{\inte}{\textnormal{int}}
\newcommand{\D}{\mathbb{D}}
\newcommand{\T}{\mathbb{T}}
\newcommand{\al}{\alpha}
\newcommand{\PR}{\textnormal{Re}}
\newcommand{\sumn}{\sum_{n=1}^{\infty}}
\newcommand{\sumno}{\sum_{n=0}^{\infty}}
\newcommand{\sumk}{\sum_{k=1}^N}
\newcommand{\R}{\mathbb{R}}
\newcommand{\RO}{\mathcal{RO}}
\newcommand{\x}{\textnormal{\textbf{x}}}
\newcommand{\Rpart}{\textnormal{Re}}
\newcommand{\Impart}{\textnormal{Im}}
\begin{document}
	\title[Hyperinvariant subspaces]
{Finite rank perturbations of normal operators:\\ Hyperinvariant subspaces and a problem of Pearcy}	
	\author{Eva A. Gallardo-Guti\'{e}rrez}
	\address{Eva A. Gallardo-Guti\'errez and F. Javier Gonz\'alez-Doña \newline
		Departamento de An\'alisis Matem\'atico y Matem\'atica Aplicada,\newline
		Facultad de Matem\'aticas,
		\newline Universidad Complutense de
		Madrid, \newline
		Plaza de Ciencias N$^{\underbar{\Tiny o}}$ 3, 28040 Madrid,  Spain
		\newline
		and Instituto de Ciencias Matem\'aticas ICMAT (CSIC-UAM-UC3M-UCM),
		\newline Madrid,  Spain } \email{eva.gallardo@mat.ucm.es}

	\author{F. Javier Gonz\'alez-Doña}
	\email{javier.gonzalez@icmat.es}
	
	\thanks{Both authors are partially supported by Plan Nacional  I+D grant no. PID2019-105979GB-I00, Spain,
		the Spanish Ministry of Science and Innovation, through the ``Severo Ochoa Programme for Centres of Excellence in R\&D'' (CEX2019-000904-S) and from the Spanish National Research Council, through the ``Ayuda extraordinaria a Centros de Excelencia Severo Ochoa'' (20205CEX001) and Grupo UCM ref. 910346. \newline
		Second author also acknowledges support of the Grant SEV-2015-0554-18-3 funded by: MCIN/AEI/ 10.13039/501100011033.}
	
\subjclass[2010]{Primary 47A15, 47A55, 47B15}

\keywords{Perturbation of normal operators, invariant subspaces}

\date{July 2022, revised January 2023}

	\begin{abstract}
 Finite rank perturbations of diagonalizable normal operators acting boundedly on infinite dimensional, separable, complex Hilbert spaces are considered from the standpoint of view of the existence of invariant subspaces. In particular, if $T=D_\Lambda+u\otimes v$ is a rank-one perturbation of a diagonalizable normal operator $D_\Lambda$ with respect to a basis $\mathcal{E}=\{e_n\}_{n\geq 1}$ and the vectors $u$ and $v$ have Fourier coefficients $\{\alpha_n\}_{n\geq 1}$ and $\{\beta_n\}_{n\geq 1}$ with respect to $\mathcal{E}$ respectively,  it is shown that $T$ has non trivial closed invariant subspaces provided that either $u$ or $v$ have a Fourier coefficient which is zero or $u$ and $v$ have non zero Fourier coefficients and
 $$ \sum_{n\geq 1} |\al_n|^2 \log \frac{1}{|\al_n|} + |\beta_n|^2 \log \frac{1}{|\beta_n|}  < \infty.$$
 As a consequence, if $(p,q)\in (0,2]\times (0,2]$ are such $\sumn (|\al_n|^p + |\beta_n|^q )< \infty,$ it is shown the existence of non trivial closed invariant subspaces of $T$ whenever
 $$(p,q)\in (0,2]\times (0,2]\setminus \{(2, r), (r, 2):\; r\in(1,2]\}.$$
 Moreover, such operators $T$ have non trivial closed  hyperinvariant subspaces whenever they are not a scalar multiple of the identity. Likewise, analogous results hold for finite rank perturbations of $D_\Lambda$.  This improves considerably previous theorems of Foia\c{s}, Jung, Ko and Pearcy \cite{FJKP07}, Fang and Xia \cite{FX12} and the authors \cite{GG} on an open question explicitly posed by Pearcy in the seventies.
 \end{abstract}

	\maketitle

\section{Introduction and preliminaries}

Despite its simplicity, apparently one of the most difficult questions in the theory of invariant subspaces in separable, infinite dimensional complex Hilbert spaces $H$ is the problem of the existence of non trivial closed invariant subspaces for a compact perturbation of a selfadjoint operator. Livsi\u{c} solved this problem for nuclear perturbations, Sahnovi\u{c} for Hilbert-Schmidt perturbations, and Gohberg and Krein, Macaev, and Schwartz for the perturbation being in the Schatten von Neumann class $\mathcal{C}_p$, $1\leq p<\infty$ (see \cite{Dunford and Schwarz} for more on the subject). In 1992, Lomonosov \cite{Lo} proved the existence of real invariant subspaces for compact perturbations of selfadjoint operators, but it is still an open question if every compact perturbation of a selfadjoint operator has a non trivial closed invariant subspace.

\smallskip

The situation is still even hopeless if one considers compact perturbations of a bit broader class of operators, namely \emph{normal operators}. In such a case, if a bounded linear operator $T$ acting on $H$  is the sum of a normal operator $N$ and a compact operator $K$, the results ensuring the existence of non trivial closed invariant subspaces of $T$ have been mostly restricted to cases when the spectrum of $N$ is contained in a Jordan curve $\gamma$ of certain regularity and $K$ satisfies additional compactness criteria (like belonging to the Schatten von Neumann class $\mathcal{C}_p$) (see the pioneering work \cite{Kitano},  or the works \cite{Radjabalipour-Radjavi}, \cite{Radjabalipour} and \cite{Chalendar2} as well as  the references therein).

\smallskip

\noindent In this general framework, in 1975 Pearcy posed the following explicit problem (see \cite[Problem K]{Pearcy})

\medskip

\begin{quotation}
\emph{Suppose $N$ is a diagonal normal operator whose eigenvalues constitute a dense subset of the unit disc $\mathbb{D}$. Show that every operator of the
form $N + F$ has a non trivial invariant subspace, where $F$ is an operator of rank one.}
\end{quotation}

\medskip

In 2007, motivated by such a question, Foia\c{s}, Jung, Ko and Pearcy undertook a significant study of what they claimed is \emph{a stubbornly intractable problem}. In particular, they proved in  \cite{FJKP07} that there is a large class of  rank-one perturbations of diagonal operators having, actually, non trivial closed \emph{hyperinvariant subspaces}, that is, closed subspaces which are invariant under every operator in the commutant of $T$. By recalling that any rank-one operator can be expressed by $u\otimes v$, where $u, v$ are non zero vectors in $H$ and
$$u\otimes v(x) = \pe{x,v}u,   \quad \text{ for } x \in H,$$
they proved the following:

\smallskip

\begin{teo}[Foias, Jung, Ko and Pearcy, \cite{FJKP07}] Let $H$ be an infinite dimensional, separable, complex Hilbert space and $\mathcal{E} =\{e_n\}_{n\geq 1}$ and orthonormal basis of $H$. Let $u= \sumn \al_ne_n$ and $v = \sumn \beta_ne_n$ be  non zero vectors in $H$. Suppose $\Lambda=\{\lambda_n\}_{n\geq 1}$ is any bounded sequence of the complex plane and $D_{\Lambda}$ the diagonal operator with respect to $\mathcal{E}$ associated to $\Lambda$, namely, $D_{\Lambda} e_n=\lambda_n e_n$ for $n\geq 1$. Then, the rank-one perturbation of $D_{\Lambda}$
$$T=D_\Lambda +u\otimes v$$
has  non trivial closed invariant subspaces provided that
\begin{equation}\label{FJKP condition}
\sumn |\al_n|^{2/3}+|\beta_n|^{2/3} < \infty.
\end{equation}
Moreover, $T$ has non trivial closed hyperinvariant subspaces whenever it is not a scalar multiple of the identity.
\end{teo}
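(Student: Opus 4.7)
\medskip

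\noindent The plan is to combine a Sherman--Morrison resolvent identity with a Borel--Cauchy transform of the spectral data, and then manufacture hyperinvariant projections through the Riesz functional calculus.

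I would first dispose of the degenerate cases. If $\beta_n = 0$ for some $n$, then $Te_n = \lambda_n e_n$, so $\Span\{e_n\}$ is a non trivial closed $T$-invariant subspace. If $\al_n = 0$ for some $n$, then $e_n$ is an eigenvector of $T^*$, and $\{e_n\}^\perp$ is $T$-invariant. Hence I may assume $\al_n\beta_n\neq 0$ for every $n\geq 1$. In that case the Sherman--Morrison formula gives
$$
(T-zI)^{-1} \;=\; (D_\Lambda-zI)^{-1} \,-\, \frac{1}{\Fi(z)}\,(D_\Lambda-zI)^{-1}u \otimes (D_\Lambda^*-\overline{z}I)^{-1}v,
$$
valid outside $\overline{\Lambda}\cup\{\Fi=0\}$, where
$$
\Fi(z) \;=\; 1 + \sumn \frac{\al_n\overline{\beta_n}}{\lambda_n-z}.
$$
Consequently $\sigma(T)\subset \overline{\Lambda}\cup\{z\in\C:\Fi(z)=0\}$, and every zero of $\Fi$ outside $\overline{\Lambda}$ is a genuine eigenvalue of $T$ with an eigenvector readable off the formula above.

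Next I would exploit hypothesis \eqref{FJKP condition} to extract boundary regularity for $\Fi$. Using an elementary H\"older-type inequality on $c_n:=\al_n\overline{\beta_n}$, the summability of $|\al_n|^{2/3}+|\beta_n|^{2/3}$ yields $\{c_n\}\in\ell^p$ for some $p<1$, which is strictly stronger than the trivial $\ell^1$ bound coming from $u,v\in H$. Standard Cauchy transform theory then guarantees that $\Fi$ and $1/\Fi$ possess non tangential boundary values almost everywhere along sufficiently regular rectifiable curves approaching $\overline{\Lambda}$, and are of bounded characteristic on the Cauchy domains of interest.

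The dichotomy that follows is what I expect to be the main obstacle. If $\Fi$ has a zero $z_0\in\C\setminus\overline{\Lambda}$, then $z_0$ is an eigenvalue and the corresponding spectral projection lies in $\biconm{T}$, so its range is hyperinvariant. Otherwise $\sigma(T)=\overline{\Lambda}$, and invariant subspaces must be extracted from inside the spectrum. I would choose a simple closed rectifiable curve $\gamma$ passing through points of continuity of $\Fi$ and splitting $\overline{\Lambda}$ into two non trivial pieces, and then form the Riesz-type projection
$$
P_\gamma \;=\; \frac{1}{2\pi i}\int_\gamma (zI-T)^{-1}\,dz.
$$
The delicate step is to prove that the contour integral converges in norm and that $P_\gamma$ is neither $0$ nor $I$; both reduce to careful estimates on the rank-one correction term in the Sherman--Morrison identity, and this is precisely where the $2/3$ exponent of \eqref{FJKP condition} is used in full strength, turning formally divergent boundary integrals into norm-convergent ones. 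Since $P_\gamma$ is manufactured from the holomorphic functional calculus of $T$, it commutes with every $A\in\conm{T}$, so $\mathrm{ran}\,P_\gamma$ is a closed hyperinvariant subspace. The assumption that $T$ is not a scalar multiple of the identity is used to vary $\gamma$ and produce a choice with $P_\gamma\notin\{0,I\}$.
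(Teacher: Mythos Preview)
This theorem is quoted from \cite{FJKP07} and is not proved in the present paper; the natural comparison is therefore with the paper's proof of the strictly stronger Theorem~\ref{main result}, which subsumes it. Your outline assembles correct ingredients --- the Sherman--Morrison resolvent, the Borel series $\Fi$, the degenerate-case reductions, and the observation that $(\al_n\overline{\beta_n})\in\ell^{1/3}$ --- but the decisive step has a genuine gap. You propose a \emph{norm}-convergent Riesz integral $P_\gamma=\frac{1}{2\pi i}\int_\gamma(zI-T)^{-1}\,dz$ along a curve $\gamma$ that meets $\sigma(T)=\overline{\Lambda}$. This cannot converge in operator norm: the diagonal term $(D_\Lambda-zI)^{-1}$ in the Sherman--Morrison identity has norm $1/\mathrm{dist}(z,\Lambda)$, which is not integrable across an accumulation point of $\Lambda$, and the rank-one correction cannot cancel a diagonal blow-up. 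Once $\gamma\cap\sigma(T)\neq\emptyset$ you are outside the holomorphic functional calculus, so the assertion that $P_\gamma\in\biconm{T}$ --- or even that $P_\gamma$ is idempotent --- is unjustified. Your appeal to ``standard Cauchy transform theory'' for boundary values of $1/\Fi$ is likewise unavailable: the poles $\lambda_n$ accumulate in a planar region, not along a curve, so the usual Smirnov--Privalov machinery does not apply.

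The approach that actually works, in \cite{FJKP07} and more transparently in the paper's proof of Theorem~\ref{main result}, replaces the operator-norm integral by local spectral subspaces $H_T(F)$, which are hyperinvariant linear manifolds \emph{by definition} --- no functional calculus is invoked for that. One proves $H_T(F)\neq\{0\}$ by exhibiting a single non-zero vector, obtained as a \emph{vector-valued} integral $\int_\gamma(\xi I-T)^{-1}x\,d\xi$ for a well-chosen $x$; convergence in $H$ (rather than in $\EL(H)$) is precisely condition~(iii) of \cite[Theorem~3.2]{GG}, namely $\sum_n\bigl(\int_\gamma|\lambda_n-\xi|^{-1}\,|d\xi|\bigr)^2|\al_n|^2<\infty$. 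This is where the hypothesis on $(\al_n)$ and $(\beta_n)$ \emph{individually} enters --- your control on the product $\al_n\overline{\beta_n}$ governs only the scalar $\Fi$, not the $H$-norm of $(D_\Lambda-\xi I)^{-1}u$. Non-density of $H_T(F)$ comes from the dual construction for $T^*$ via Corollary~\ref{adjuntos}, and the non-vanishing of $1+f_T$ along the chosen cut --- which you gesture at but do not establish --- is handled in the paper by a separate quasisimilarity argument (Theorem~\ref{prop quasisimilares}).
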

	
In 2012, Fang and Xia were able to strengthen the techniques of Foia\c{s}, Jung, Ko and Pearcy  also for finite rank perturbations of $D_\Lambda$, proving that the exponent ``2/3'' in \eqref{FJKP condition} could be pushed further up to 1. In particular, their result for rank-one perturbations of diagonal operators states the following:

\begin{teo}[Fang and Xia, \cite{FX12}] With the notation as introduced above, the linear bounded operator $T = D_\Lambda + u\otimes v$ has non trivial closed invariant subspaces whenever
\begin{equation}\label{equation FX}
\sumn \left( |\alpha_n|+ |\beta_n| \right )< \infty.
\end{equation}
Moreover, $T$ has non trivial closed hyperinvariant subspaces whenever it is not a scalar multiple of the identity.
\end{teo}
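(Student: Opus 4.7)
The plan is to route everything through the Cauchy-type function
\[
h(z) \;=\; 1 + \bigl\langle (D_\Lambda-z)^{-1}u,\,v\bigr\rangle \;=\; 1+\sum_{n\geq 1}\frac{\alpha_n\,\overline{\beta_n}}{\lambda_n-z},
\]
which converges absolutely on $\mathbb{C}\setminus\overline{\{\lambda_n\}}$ (since $\sumn(|\alpha_n|+|\beta_n|)<\infty$ forces $\{\alpha_n\overline{\beta_n}\}\in\ell^1$), and through the Sherman--Morrison--Woodbury identity
\[
(T-z)^{-1} \;=\; (D_\Lambda-z)^{-1} - \frac{1}{h(z)}\,\bigl((D_\Lambda-z)^{-1}u\bigr)\otimes\bigl((D_\Lambda^*-\bar z)^{-1}v\bigr),
\]
which shows that off $\overline{\{\lambda_n\}}$ the operator $T-z$ fails to be invertible precisely at the zeros of $h$.

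I would begin with the elementary reductions. If some $\beta_{n_0}=0$, then $(u\otimes v)e_{n_0}=0$ and $\mathbb{C}e_{n_0}\subset\ker(T-\lambda_{n_0}I)$ is a hyperinvariant subspace; the case $\alpha_{n_0}=0$ is handled symmetrically by passing to $T^{*}=D_{\overline\Lambda}+v\otimes u$. One may also assume that the $\lambda_n$ are distinct, since for any repeated value $\lambda$ of $\{\lambda_n\}$ the linear functional $\langle\,\cdot\,,v\rangle$ has a nontrivial kernel on the corresponding multidimensional eigenspace of $D_\Lambda$, producing a vector $w\neq 0$ with $Tw=\lambda w$. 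Next, if $h$ admits a zero $z_0\in\mathbb{C}\setminus\overline{\{\lambda_n\}}$, a direct computation shows that $w_0:=(D_\Lambda-z_0)^{-1}u$ satisfies $Tw_0=z_0 w_0+h(z_0)u=z_0 w_0$, so $\ker(T-z_0 I)$ is a nontrivial closed hyperinvariant subspace.

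The genuine difficulty is the residual case in which no Fourier coefficient vanishes, the $\lambda_n$ are distinct, and $h$ has no zeros off $\overline{\{\lambda_n\}}$, so $\sigma(T)\subseteq\overline{\{\lambda_n\}}$. Here I would follow the Foia\c{s}--Jung--Ko--Pearcy blueprint of constructing an analytic $H^\infty$-functional calculus for $T$ on a Cauchy-transformable neighborhood of $\overline{\{\lambda_n\}}$, and then extract a nontrivial spectral projection $\chi_E(T)$ from a separation of the essential spectrum, producing the hyperinvariant subspace. The $\ell^1$-hypothesis is decisive here, supplying the sharp boundary estimates for $h$ and $1/h$ needed to make the calculus bounded: whereas \cite{FJKP07} used $\ell^{2/3}$-summability to secure a Lipschitz-type modulus for $h$, pushing the exponent up to $1$ requires the Cauchy-transform estimate for $\ell^1$-sequences that is the central innovation of \cite{FX12}. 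The main obstacle is exactly this---ruling out the ``everything collapses'' scenario in which $h$ exhibits neither zeros nor exploitable boundary behaviour, so that $T$ would have to be a scalar multiple of the identity---a rigidity statement that the $\ell^1$ hypothesis is designed to enable.
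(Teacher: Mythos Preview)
Your elementary reductions are correct and match the standard opening moves. The gap is in the ``residual case'': what you describe there is not a proof but a collection of keywords. There is no $H^\infty$ functional calculus available for a general rank-one perturbation of a diagonal operator, and even if there were, a characteristic function $\chi_E$ is not analytic, so ``extracting $\chi_E(T)$'' from an analytic calculus makes no sense. Neither \cite{FJKP07} nor \cite{FX12} proceeds this way, and your final paragraph about ruling out the ``everything collapses'' scenario does not correspond to any concrete argument.

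The actual route, both in \cite{FX12} and in this paper, is via \emph{local spectral subspaces}. One shows that for suitable closed sets $F\subset\sigma(T)$ the manifold $H_T(F)=\{x:\sigma_T(x)\subseteq F\}$ is nonzero and non-dense; its closure is then hyperinvariant. The concrete mechanism is a contour integral: one takes a vertical line $\Rpart z=\x$ (closed up by an arc outside the spectrum), and the resolvent formula you wrote down shows that the obstruction to integrating $(T-z)^{-1}$ along this line is controlled by
\[
\sumn\left(\int_\gamma\frac{|d\xi|}{|\lambda_n-\xi|}\right)^2|\alpha_n|^2
\quad\text{and}\quad
\sup_{\xi\in\gamma}\frac{1}{|1+f_T(\xi)|}.
\]
The ``central innovation'' of \cite{FX12} you allude to is not a boundary estimate on $h$ or $1/h$; it is the elementary but decisive Lemma~2.1 there (generalized here as Lemma~\ref{lema fx}): under the $\ell^1$ hypothesis, for almost every $x\in\R$ the series $\sumn|\alpha_n|^2/|\Rpart\lambda_n-x|$ converges. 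This is what makes the first integral above finite and what makes $f_T$ continuous on the vertical segment. One then needs a separate argument (here via quasisimilarity, Theorem~\ref{prop quasisimilares}) to exclude zeros of $1+f_T$ on that segment, and finally one must run the same construction for $T^*$ on a second vertical line to conclude, via Corollary~\ref{adjuntos}, that the spectral subspace is not dense. None of this structure appears in your sketch.
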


\begin{figure}[htb]
\centering
\begin{minipage}{.5\textwidth}
  \centering
  \includegraphics[width=.7\linewidth]{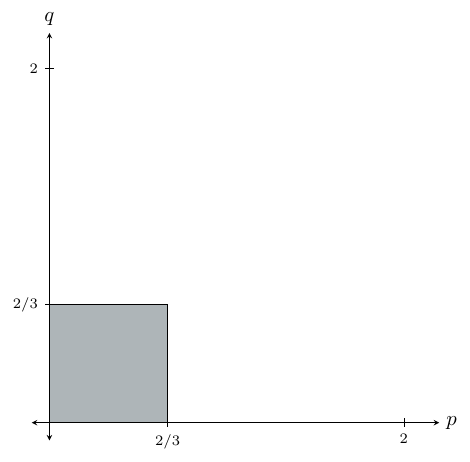}
 \captionof{figure}{ }
  \label{fig:1}
\end{minipage}%
\begin{minipage}{.5\textwidth}
  \centering
  \includegraphics[width=.68\linewidth]{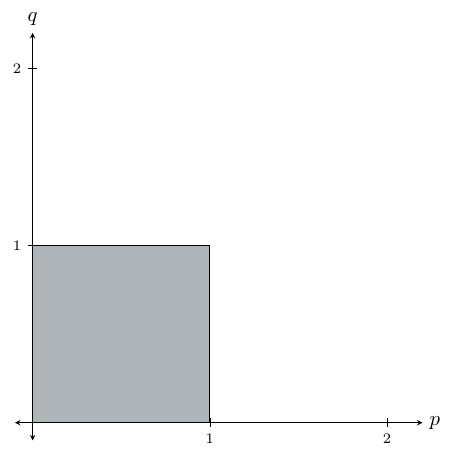}
  \captionof{figure}{ }
  \label{fig:2}
\end{minipage}
\vspace*{-0,2cm}
\end{figure}

Recently, the authors have been able to describe spectral subspaces associated to closed sets of the complex plane of finite rank perturbations of diagonal operators and, in particular, of normal operators of multiplicity one whose eigenvectors span $H$. Such a description has allowed them to exhibit proper closed hyperinvariant subspaces
as far as such spectral subspaces are both non zero and non-dense. As a consequence, they have been able to enlarge  the class of finite rank perturbations of a diagonalizable normal operator which are known to have non trivial closed hyperinvariant subspaces. In particular, they proved the following in the case of  rank-one perturbations of diagonal operators:

\begin{teo}[Gallardo-Gutiérrez and González-Doña, \cite{GG}] With the notation as introduced above, the linear bounded operator $T = D_\Lambda + u\otimes v$ has non trivial closed invariant subspaces provided that either
$$\sumn |\al_n|< \infty$$
or
$$\sumn |\beta_n| < \infty.$$
Moreover, $T$ has non trivial closed hyperinvariant subspaces whenever it is not a scalar multiple of the identity. \label{gg}
\end{teo}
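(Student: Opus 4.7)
The plan is to prove the theorem via local spectral theory, after some standard reductions. First I would pass to the adjoint: since $T^{*}=D_{\overline{\Lambda}}+v\otimes u$ interchanges the roles of $u$ and $v$, and a closed subspace is hyperinvariant for $T$ if and only if its orthogonal complement is hyperinvariant for $T^{*}$, I may assume $\sum_{n}|\alpha_{n}|<\infty$. Next I would dispose of the degenerate cases: if $\beta_{n_{0}}=0$ for some index $n_{0}$, then $Te_{n_{0}}=\lambda_{n_{0}}e_{n_{0}}$ (because $\langle e_{n_{0}},v\rangle=\overline{\beta_{n_{0}}}=0$), so $\ker(T-\lambda_{n_{0}})$ is a non-trivial closed hyperinvariant subspace; symmetrically, if $\alpha_{n_{0}}=0$, then $e_{n_{0}}\in\ker(T^{*}-\overline{\lambda_{n_{0}}})$, so $\overline{\textnormal{ran}(T-\lambda_{n_{0}})}$ serves as a proper closed hyperinvariant subspace of $T$. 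Henceforth assume $\alpha_{n}\beta_{n}\neq 0$ for every $n$; note that a rank-one perturbation of a non-scalar operator is itself non-scalar, so the non-scalar hypothesis of the theorem is in fact automatic here.

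The central construction uses local spectral subspaces. For a closed set $F\subset\mathbb{C}$, define
$$H_{T}(F)=\{x\in H:z\mapsto(T-z)^{-1}x\text{ extends analytically from }\rho(T)\text{ to }\mathbb{C}\setminus F\},$$
which is automatically a linear manifold invariant under every operator in the commutant $\{T\}'$. The goal is to choose a closed $F\subsetneq\sigma(T)$ for which $H_{T}(F)$ is simultaneously norm-closed, non-zero, and not dense in $H$; such an $H_{T}(F)$ then supplies the desired non-trivial closed hyperinvariant subspace. The key analytic input is the Sherman--Morrison identity
$$(T-z)^{-1}x=(D_{\Lambda}-z)^{-1}x-\frac{\langle(D_{\Lambda}-z)^{-1}x,v\rangle}{f(z)}\,(D_{\Lambda}-z)^{-1}u,\qquad f(z)=1+\sum_{n}\frac{\alpha_{n}\overline{\beta_{n}}}{\lambda_{n}-z}.$$
Because $\sum|\alpha_{n}|<\infty$ and $|\beta_{n}|\leq\|v\|$, the function $f-1$ is the Cauchy transform of the finite-variation measure $\sum\alpha_{n}\overline{\beta_{n}}\,\delta_{\lambda_{n}}$, with the boundary regularity this entails. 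A careful analysis of the singularity structure of the right-hand side of the identity, driven by the regularity of $f$, would then produce, for appropriately shaped proper closed sets $F\subsetneq\sigma(T)$, a sufficiently large family of vectors $x$ with $\sigma_{T}(x)\subset F$; non-density of $H_{T}(F)$ follows because vectors whose spectral expansion sees infinitely many frequencies in $\sigma(T)\setminus F$ have local spectrum meeting $\sigma(T)\setminus F$.

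The hard part will be showing that $H_{T}(F)$ is norm-closed --- property $(C)$ in local spectral theory. This reduces to uniform norm estimates on the analytic extensions of $z\mapsto(T-z)^{-1}x$ across $\mathbb{C}\setminus F$ as $x$ varies in $H_{T}(F)$, and it is precisely here that the hypothesis $\sum|\alpha_{n}|<\infty$ is used essentially, through the quantitative control it provides on the Cauchy transform $f-1$ and on its reciprocal $1/f$. Once closedness is in hand, the hyperinvariance of $H_{T}(F)$ under every operator commuting with $T$ is immediate from its definition, and the theorem follows.
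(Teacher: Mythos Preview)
This theorem is cited from \cite{GG} rather than proved in the present paper, but the two tools from \cite{GG} that drive its proof are stated here (Theorem~3.2 of \cite{GG} and Corollary~\ref{adjuntos}), so your outline can be compared against the intended method. You correctly identify local spectral subspaces and the Sherman--Morrison resolvent identity as the engine, and your initial reductions are fine. The gap is at the step you yourself flag as ``the hard part'': you propose to prove that $H_T(F)$ is norm-closed (property $(C)$). This is not the route taken, and it is far from clear that $\sum_n|\alpha_n|<\infty$ alone yields property $(C)$ --- that would amount to establishing decomposability of $T$, a much stronger conclusion than is needed or claimed. The argument in \cite{GG} bypasses closedness entirely: one takes the \emph{closure} $\overline{H_T(F_1)}$, which remains hyperinvariant, and shows it is non-dense by producing a non-zero $H_{T^*}(F_2^*)$ for a closed $F_2$ disjoint from $F_1$; then $H_T(F_1)\subseteq H_{T^*}(F_2^*)^\perp$ by Corollary~\ref{adjuntos}. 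Your proposed non-density argument (local spectrum ``sees frequencies in $\sigma(T)\setminus F$'') is a heuristic, not a proof, and does not hold in this generality.

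The non-zeroness of $H_T(F)$ is also more concrete than your sketch indicates. Rather than invoking abstract boundary regularity of a Cauchy transform, one integrates the Sherman--Morrison resolvent formula along a closed, piecewise $C^1$ curve $\gamma$ bounding a region that contains a proper portion of $\sigma(T)$ --- a Riesz-type projection, except that $\gamma$ is allowed to cross $\sigma(T)$. The hypothesis $\sum_n|\alpha_n|<\infty$ is exactly what guarantees that $\xi\mapsto 1/(1+f_T(\xi))$ extends continuously to $\gamma$ and that the resulting contour integral converges in $H$ to a non-zero vector with local spectrum inside $\overline{\inte(\gamma)}$; this is the content of \cite[Theorem~3.2]{GG} quoted in the preliminaries. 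Running the same construction for $T^*$ over a disjoint piece of the spectrum supplies the non-zero $H_{T^*}(F_2^*)$ required for non-density.
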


\begin{figure}[htb]
\centering
  \includegraphics[width=.36\linewidth]{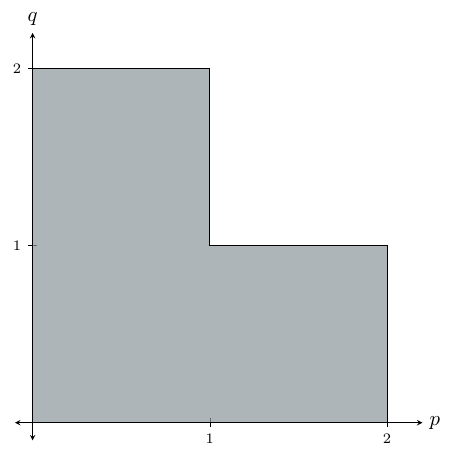}
  \captionof{figure}{ }
  \label{fig:3}
\end{figure}

By considering the pairs $(p,q)\in (0,2]\times (0,2]$ such that the condition
$$
\sumn (|\al_n|^p + |\beta_n|^q )< \infty,
$$
implies the existence of non trivial closed hyperinvariant subspaces of the non-scalar operator $T=D_\Lambda + u\otimes v$, Foias, Jung, Ko and Pearcy Theorem states that this is the case when $(p,q)\in  (0,2/3]\times (0,2/3]$, Fang and Xia Theorem if  $(p,q)\in (0,1]\times (0,1]$ and the authors' theorem  whenever $(p,q)\in (0,2]\times (0,1] \cup (0,1]\times (0,2]$ (see the Figures \ref{fig:1}, \ref{fig:2} and \ref{fig:3}).

\medskip

The goal of the present manuscript is taking further the previous results and proving, in particular, the following result:

\medskip

\begin{thm}[Theorem \ref{main result}] With the notation as introduced above, the linear bounded operator $T = D_\Lambda + u\otimes v$ has non trivial closed invariant subspaces provided that either $u$ or $v$ have a Fourier coefficient which is zero or $u$ and $v$ have non zero Fourier coefficients and
\begin{equation}\label{sumabilidad-intro}
    \sumn |\al_n|^2 \log \frac{1}{|\al_n|} + |\beta_n|^2 \log \frac{1}{|\beta_n|}< \infty.
\end{equation}
Moreover, if $T$ is not a scalar multiple of the identity, it has non trivial closed hyperinvariant subspaces.
\end{thm}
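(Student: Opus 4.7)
The plan is to extend the spectral-subspace machinery from the authors' earlier paper \cite{GG}, upgrading the quantitative estimates there so that they accommodate the new logarithmic summability hypothesis.

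I would begin with the degenerate case. If $\beta_{n_0}=0$ for some index, then $(u\otimes v)e_{n_0}=\overline{\beta_{n_0}}u=0$, so $Te_{n_0}=\lambda_{n_0}e_{n_0}$ and $e_{n_0}$ is an eigenvector of $T$; symmetrically, $\al_{n_0}=0$ forces $e_{n_0}$ to be an eigenvector of $T^*$. In either situation, a standard spectral argument (collecting closed spans of eigenvectors associated to suitable Borel subsets of the point spectrum) yields proper closed hyperinvariant subspaces unless $T$ is a scalar multiple of the identity.

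For the main case, assume all $\al_n,\beta_n$ are nonzero and \eqref{sumabilidad-intro} holds. Following \cite{GG}, I would realize the desired hyperinvariant subspace as a spectral subspace $H_T(F)$ for a carefully chosen compact $F\subset\mathbb{C}$: recall that $H_T(F)$ is the set of $x\in H$ admitting an $H$-valued analytic extension $f_x\colon\mathbb{C}\setminus F\to H$ of the local resolvent $(z-T)^{-1}x$, and if it is both nonzero and not dense then its closure is a proper closed hyperinvariant subspace of $T$. The key scalar object is the Cauchy transform
$$
\Fi(z)=\sumn \frac{\al_n\overline{\beta_n}}{z-\lambda_n},
$$
which appears in the standard rank-one perturbation formula for $(z-T)^{-1}$. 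The crucial new input is that \eqref{sumabilidad-intro}, combined via a Cauchy--Schwarz splitting $|\al_n\beta_n|=|\al_n|\cdot|\beta_n|$ with logarithmic weight control, provides finiteness of sums of the form $\sumn |\al_n||\beta_n|\log\frac{1}{|\lambda_n-z|}$ on large subsets of $\mathbb{C}$. This finiteness is exactly what is needed to extend $\Fi$ and the vector-valued local resolvents across boundary arcs of $\sigma(T)$ where the $\ell^1$-type methods of \cite{FX12} and \cite{GG} break down.

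The main obstacle, as is typical in this circle of problems, is to prove \emph{simultaneous} nontriviality and non-density of $H_T(F)$ for a single $F$. Nontriviality can be secured by exhibiting a concrete vector (for instance some $e_n$ with $\lambda_n\notin F$) whose local resolvent admits the required analytic extension, using the new estimates directly. Non-density, on the other hand, requires a separating functional, produced by running the analogous construction on $T^*$---which is precisely where the $(\al,\beta)$-symmetric form of \eqref{sumabilidad-intro} becomes indispensable. Choosing $F$ appropriately (typically a closed disk or the closure of a region inside $\sigma(T)$) so that both bounds hold at once is where I expect the bulk of the technical work to lie.
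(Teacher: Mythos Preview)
Your outline captures the broad architecture---reduce to the $(\RO)$ class, use spectral subspaces $H_T(F)$, and exploit the $T\leftrightarrow T^*$ symmetry for non-density---but it misses the decisive technical obstacle and misidentifies the quantitative input.

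First, the ``crucial new input'' is not the quantity $\sumn |\al_n||\beta_n|\log\frac{1}{|\lambda_n-z|}$ you write down; a Cauchy--Schwarz splitting of that sum leads to $\sumn |\al_n|^2\log\frac{1}{|\lambda_n-z|}$, which the hypothesis \eqref{sumabilidad-intro} does \emph{not} control (the weight there involves $|\al_n|$, not $|\lambda_n-z|$). What the paper actually proves from \eqref{sumabilidad-intro} is Lemma~\ref{lema fx}: for almost every real $x$, the series $\sumn |\al_n|^2/|\PR(\lambda_n)-x|$ converges. This is what feeds into condition~(iii) of \cite[Theorem 3.2]{GG} via the estimate $\int_\gamma |d\xi|/|\lambda_n-\xi|\lesssim |\log|\PR(\lambda_n)-x||$, and it is also what places $u$ in the range of a square root of $D_\Lambda-\xi_0 I$ later on.

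Second, and more seriously, you do not address the zeros of $1+f_T$. The machinery from \cite{GG} requires that $\xi\mapsto 1/(1+f_T(\xi))$ be well defined and continuous on the curve $\gamma$. Off $\overline{\Lambda}$ this is automatic from $\sigma_p(T)=\emptyset$ via Ionascu's theorem, but at points where $\gamma$ crosses $\sigma(T)$ one has no a~priori reason why $1+f_T$ should not vanish. Ruling this out is the heart of the paper's argument: assuming $1+f_T(\xi_0)=0$ at some $\xi_0\in\sigma(T)$ with $\PR(\xi_0)\in\Omega(T)$, one uses the quasisimilarity result \cite[Theorem~2.5]{FJKP08} to pass from $T-\xi_0 I$ to an auxiliary operator $S$ for which the same equation forces $0\in\sigma_p(S^*)$, contradicting $\sigma_p(T^*)=\emptyset$. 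Your proposal has no mechanism to handle this, and the plan of ``exhibiting a concrete vector (for instance some $e_n$ with $\lambda_n\notin F$)'' does not circumvent it: the local resolvent formula for $e_n$ still involves the factor $1/(1+f_T)$.
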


It is worthy to remark that in the case that $u$ or $v$ have a zero Fourier coefficient, Ionascu \cite{I01} proved that either $T$ or its adjoint $T^*$ have an eigenvalue. So the relevant part of the statement is when $u$ and $v$ have non zero Fourier coefficients. In such a case, the summability condition \eqref{sumabilidad-intro} along with authors' theorem yields, in particular, that the series $\sumn (|\al_n|^p + |\beta_n|^q )$
converges for every
$$(p,q)\in (0,2]\times (0,2]\setminus \{(2, r), (r, 2):\; r\in(1,2]\}.$$

\begin{figure}[htb]
\centering
  \includegraphics[width=.36\linewidth]{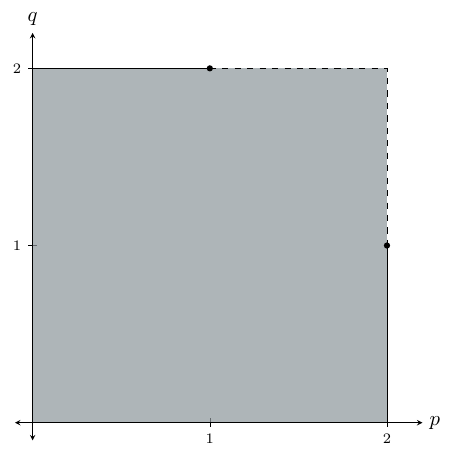}
  \captionof{figure}{ }
  \label{fig:4}
\end{figure}

Accordingly, as a consequence, if $T = D_\Lambda + u\otimes v$ is non-scalar, it has non trivial closed hyperinvariant subspaces as far as
$$(p,q)\not \in \{(2, r), (r, 2):\; r\in(1,2]\}$$
(see Figure \ref{fig:4}). Clearly, this enlarges considerably the class of rank-one perturbations of  diagona\-lizable normal operators which are known to have non trivial closed hyperinvariant subspaces.

\medskip

In order to prove Theorem \ref{main result}, we strengthen the methods in \cite{GG} by exhibiting non zero and non-dense spectral subspaces in Section \ref{sec2}. A significant role in our approach here will be also played by \emph{quasisimilar operators} and Theorem 2.5 in the work \cite{FJKP08}, where the authors proved that the commutants of rank-one perturbations of a diagonalizable normal operators are abelian.

\medskip

Likewise, in Section \ref{sec3} we will present an example showing the threshold of our approach which, in particular, is not covering those pairs $(p,q)\in \{(2, r), (r, 2):\; r\in(1,2]\}$. 


\medskip

Finally, in Section \ref{sec4} we deal with finite rank perturbations of diagonal operators proving Theorem \ref{resultado rango finito}. Some of the results previously proved for rank-one perturbations of diagonal operators are generalized to this setting, pointing out the main difficulties. This result generalizes Fang and Xia Theorem \cite[Theorem 1.4]{FX12} and \cite[Theorem 6.4]{GG} and involves the analysis of an \emph{ad-hoc} matrix associated to $T$.

\medskip

We close this introductory section by recalling some preliminaries and relevant results addressed in the aforementioned works which will be of help to follow the present manuscript.


\subsection{The framework} In what follows, $H$ will denote an infinite dimensional separable complex  Hilbert space, $\EL(H)$ the Banach algebra of all bounded linear operators on $H$ and $\mathcal{E}= \{e_n\}_{n\geq 1}$ an (ordered) orthonormal basis of $H$ fixed.

If $\Lambda = (\lambda_n)_{n\geq 1}$ is any bounded sequence in the complex plane $\C$,  the diagonal operator $D_\Lambda$ with respect to $\mathcal{E}$ associated to $\Lambda$ is defined by
$$D_\Lambda e_n = \lambda_n e_n,\qquad (n\geq 1).$$
Any rank-one perturbation of $D_\Lambda$  can be written as
\begin{equation}\label{diagonal expression}
	T = D_\Lambda + u\otimes v,
\end{equation}
where $u,$ and $v$ are non zero vectors in $H$. It is important to note that \eqref{diagonal expression} is not unique as far as rank-one perturbations of diagonal operators concern but, if $u$ and $v$ have non zero components for every $n\geq 1$, uniqueness in \eqref{diagonal expression} follows (see \cite[Proposition 1.1]{I01}).

Note that rank-one perturbations of normal operators whose eigenvectors span $H$ are unitarily equivalent to those expressed by \eqref{diagonal expression}, and consequently, it suffices to study them from the standpoint of invariant subspaces.

\subsection{$(\RO)$ operators} In \cite{FJKP07}, the authors introduced the class of operators $(\RO)$ as follows:

\smallskip

\begin{defiRO} Let $\Lambda=(\lambda_n)_{n\geq 1} \subset \C$ be a bounded sequence and $D_{\Lambda}$ the diagonal operator associated to $\Lambda$ with respect to $\mathcal{E}$.  The rank-one perturbation of $D_{\Lambda}$
$$T = D_{\Lambda}+u\otimes v,$$
where $u = \sumn \al_n e_n$ and $v = \sumn \beta_n e_n$ are non zero vectors in $H$, belongs to the class $(\RO)$ if:
\begin{enumerate}
		\item [(i)] $\al_n\beta_n \neq 0$ for every $n \in \N$;
		\item [(ii)] the map $n \in \N \mapsto \lambda_n \in \Lambda$ is injective;
		\item [(iii)] the derived set $\Lambda'$ is not a singleton.
\end{enumerate}
\end{defiRO}

\noindent In particular, they proved that rank-one perturbations of diagonal operators not belonging to $(\RO)$  have always non trivial closed hyperinvariant subspaces as far as they are non-scalar (see \cite[Proposition 2.1]{FJKP07}).

\subsubsection{Spectrum of $(\RO)$ operators} Both the spectrum $\sigma(T)$ and the point spectrum $\sigma_p(T)$ of operators $T$ belonging to the class $(\RO)$ were characterized by Ionascu \cite{I01} in terms of Borel series. More precisely,  let $T= D_{\Lambda}+u\otimes v \in (\RO)$, where $\Lambda=(\lambda_n)_{n\geq 1}$ is a bounded sequence in $\mathbb{C}$,  $u = \sumn \al_n e_n$ and $v = \sumn \beta_n e_n$ are non zero vectors in $H$. Denote by $f_T$ the Borel series associated to $T$, namely
\begin{equation}\label{Borel series T}
f_T(z) = \sumn \frac{\al_n\overline{\beta_n}}{\lambda_n-z},
\end{equation}
for those $z\in \mathbb{C}$ such that the series converges. Clearly, $f_T$ is an analytic function in $\C\setminus \overline{\Lambda}$ and next result states that, in some sense, it encodes the point spectrum of $T$.

\begin{thm}[Ionascu \cite{I01}] With the introduced notation, $z \in \C$ belongs to $\sigma_p(T)$ if and only if
\begin{enumerate}
		\item [(i)] $z\not \in \Lambda$,
		\item [(ii)] $u \in \textnormal{ran}(D_\Lambda-z I),$ or equivalently
$$\sumn \frac{|\al_n|^2}{|z-\lambda_n|^2}<\infty,$$
		\item [(iii)]$f_T(z)+1=0.$
\end{enumerate}
Moreover, the spectrum of $T$ is given by
\begin{equation}\label{autovalores}
\sigma(T) = \Lambda' \cup \left \{z \in \C\setminus \overline{\Lambda}: f_T(z)+1=0 \right \}.
\end{equation}
\end{thm}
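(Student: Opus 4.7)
The plan is to prove the two assertions---the characterization of $\sigma_p(T)$ and the spectrum formula---separately, the first by direct algebraic manipulation of the eigenvalue equation, and the second by combining this with Fredholm/Weyl theory for the rank-one compact perturbation.

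For the point-spectrum part, I would start from $(T-zI)x=0$ for some non zero $x=\sumn x_n e_n$, which rewrites as $(D_\Lambda - zI)x = -\pe{x,v}\,u$, i.e.
\begin{equation*}
  (\lambda_n - z)\,x_n \;=\; -\pe{x,v}\,\al_n, \qquad n\geq 1.
\end{equation*}
First I would rule out $z\in \Lambda$: if $z=\lambda_k$ then the $k$-th equation gives $\pe{x,v}\al_k=0$, and since $\al_k\neq 0$ by the $(\RO)$ hypothesis, $\pe{x,v}=0$; the remaining equations, using injectivity of $n\mapsto \lambda_n$, force $x_n=0$ for $n\neq k$, so $x=x_k e_k$; but then $\pe{x,v}=x_k\overline{\beta_k}$ and $\beta_k\neq 0$ gives $x_k=0$, a contradiction. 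This yields (i). Next, with $z\notin \Lambda$, the equation $x_n=-\pe{x,v}\al_n/(\lambda_n-z)$ forces $\pe{x,v}\neq 0$ (otherwise $x=0$); normalizing so that $-\pe{x,v}=1$, the vector $x_n=\al_n/(\lambda_n-z)$ belongs to $H$ exactly when the series in (ii) converges, which is the statement $u\in \textnormal{ran}(D_\Lambda-zI)$. Finally, imposing $\pe{x,v}=-1$ on this candidate gives $\sumn \al_n\overline{\beta_n}/(\lambda_n-z) = -1$, i.e. $f_T(z)+1=0$, which is (iii); convergence of the Borel series here is justified by a Cauchy--Schwarz estimate against (ii) and $v\in H$. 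The converse is immediate: under (i)--(iii), $x_n:=\al_n/(\lambda_n-z)$ defines a non zero vector in $H$ satisfying $(T-zI)x=0$.

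For the spectrum formula, I would use that $T-D_\Lambda = u\otimes v$ is compact, hence by Weyl's theorem $\sigma_{\mathrm{ess}}(T)=\sigma_{\mathrm{ess}}(D_\Lambda)=\Lambda'$, and for every $z\notin \Lambda'$ the operator $T-zI$ is Fredholm of index $0$; in particular $z\in \sigma(T)$ if and only if $z\in \sigma_p(T)$. Combining this with the first part: points in $\Lambda\setminus \Lambda'$ are excluded from $\sigma(T)$ because they fail (i); points in $\C\setminus \overline{\Lambda}$ automatically satisfy (ii) (since $D_\Lambda-zI$ is boundedly invertible there), so they lie in $\sigma(T)$ precisely when $f_T(z)+1=0$; and all of $\Lambda'$ lies in $\sigma(T)$ by the Weyl identity. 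Altogether
\begin{equation*}
  \sigma(T) \;=\; \Lambda' \,\cup\, \{\,z\in \C\setminus\overline{\Lambda}\;:\; f_T(z)+1=0\,\},
\end{equation*}
as required. As a sanity check on the $\C\setminus \overline{\Lambda}$ branch, one may recognize the identity $T-zI=(D_\Lambda-zI)\bigl(I+(D_\Lambda-zI)^{-1}u\otimes v\bigr)$, whose second factor is a rank-one perturbation of the identity and hence invertible iff $\pe{(D_\Lambda-zI)^{-1}u,v}\neq -1$, which is exactly $f_T(z)\neq -1$.

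The main obstacle---and the only point where one has to be careful---is the exclusion of the isolated points $\lambda_k\in \Lambda\setminus \Lambda'$ from $\sigma(T)$. Purely algebraically, one only knows these points are not eigenvalues of $T$; the fact that they are also in the resolvent requires the Fredholm/index argument above (equivalently, a direct construction of a bounded inverse of $T-\lambda_k I$ using that $D_\Lambda-\lambda_k I$ has closed range and a one-dimensional kernel spanned by $e_k$, together with the $(\RO)$ non-vanishing of $\al_k,\beta_k$). Once this is handled, everything else is a matter of computing Fourier coefficients.
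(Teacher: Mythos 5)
The paper does not actually prove this statement: it is quoted in the preliminaries as a known theorem of Ionascu with a citation to \cite{I01}, so there is no internal proof to compare against. Your argument is correct and is essentially the standard one: the coordinatewise analysis of $(\lambda_n-z)x_n=-\langle x,v\rangle\,\alpha_n$, combined with the $(\mathcal{RO})$ hypotheses $\alpha_n\beta_n\neq 0$ and the injectivity of $n\mapsto\lambda_n$, yields the equivalence with (i)--(iii), and the factorization $T-zI=(D_\Lambda-zI)\bigl(I+(D_\Lambda-zI)^{-1}u\otimes v\bigr)$ together with the stability of the essential spectrum and of the Fredholm index under the compact perturbation $u\otimes v$ gives the spectrum formula. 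You also correctly isolate the one genuinely delicate point, namely that the isolated eigenvalues $\Lambda\setminus\Lambda'$ of $D_\Lambda$ must be shown to lie in the resolvent of $T$ and not merely outside $\sigma_p(T)$, and your index-zero argument settles it.
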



\medskip

\subsection{Spectral subspaces of $(\RO)$ operators} In order to prove both Theorem \ref{main result} and Theorem \ref{resultado rango finito}, we will exhibit non zero and non-dense spectral subspaces associated to certain closed sets in $\mathbb{C}$. Spectral subspaces arise, in a natural way, scanning the possible decompositions of the spectrum of an operator and play a role in the \emph{Local Spectral Theory}. For the sake of completeness, we collect some of the basic tools and refer to  the monograph \cite{LN00} for more on the subject.

\smallskip

Recall that an operator $T\in \mathcal{L}(H)$ has the \emph{single-valued extension property} (SVEP) if for every connected open set $G\subset \mathbb{C}$ and every analytic function $f:G\to X$  such that $(T-\lambda I) f(\lambda)\equiv 0$ on $G$, one has $f\equiv 0$ on $G$. Note that every operator $T$ such that $\sigma_p(T)$ has empty interior has the SVEP.

\medskip

If $T$ has the SVEP and $x\in H$, the \textit{local resolvent} $\rho_T(x)$ of $T$ at $x$  is the union of all open sets $U \subset \C$ such that there exists a vector-valued holomorphic function $f_x: U \rightarrow H$ such that
\begin{equation}\label{ecuacion resolvente local}
(T-zI)f_x(z) = x \quad (z\in U).
\end{equation}
The \textit{local spectrum} $\sigma_T(x)$ at $x$ is defined as the complement of $\rho_T(x)$:
$$\sigma_T(x) = \C \setminus \rho_T(x).$$
Among many other properties, the local spectrum satisfies $\sigma_T(x) \subseteq \sigma(T)$ for every $x \in X$ and $\sigma_T(0) = \emptyset$.

\medskip

For every subset $\Omega\subset \mathbb{C}$, \emph{the local spectral subspace of $T$} associated with $\Omega$ is
$$H_T (\Omega)= \{x \in H :\; \sigma_T(x) \subseteq  \Omega\}.$$
If $\Omega_1\subset \Omega_2$ then $H_T (\Omega_1)\subset H_T (\Omega_2)$ and $H_T(\Omega) = H_T(\Omega\cap \sigma(T))$.
It is important to note that $H_T(\Omega)$ is a linear manifold that is hyperinvariant for $T$ but not necessarily closed even for closed subsets (indeed, even in the case that $T$ has the SVEP, see \cite[Chapter 2]{AIENA-book} for instance). The following proposition deals with spectral subspaces associated to closed sets (see \cite[Proposition 2.5.1]{LN00}):

\begin{propo}
Assume $T\in \EL(H)$ and both $T$ and its adjoint $T^*$ have the SVEP. Let $F_1, F_2\subset \C$ be disjoint closed sets. Then
\begin{enumerate}
\item[(1)] $H_T(F_1 \cup F_2) = H_T(F_1) \oplus H_T(F_2)$ holds as an algebraic sum.
\item[(2)]$H_T(F_1) \subseteq H_{T^*}(F_2^*)^\perp,$  where $F_2^*=\{\overline{z}:\; z\in F_2\}$.
\end{enumerate}
\end{propo}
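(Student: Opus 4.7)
The plan is to combine a Cauchy integral decomposition of the local resolvent with a duality/Liouville argument. A preliminary fact I would use throughout: under SVEP, $\sigma_T(x)=\emptyset$ implies $x=0$. Indeed, if $f_x$ extends to an entire $H$-valued function with $(T-zI)f_x(z)=x$, the Neumann expansion $f_x(z)=-\sumno T^nx/z^{n+1}$ valid for $|z|>\|T\|$ shows $f_x\to 0$ at infinity, so Liouville forces $f_x\equiv 0$, whence $x=0$.

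For (1), fix $x\in H_T(F_1\cup F_2)$ with local extension $f_x$ holomorphic off $\sigma_T(x)\subseteq F_1\cup F_2$. Choose disjoint open $U_1,U_2\subset \C$ with $F_i\subseteq U_i$ (possible since $F_1,F_2$ are disjoint closed sets in the metric space $\C$); the sets $K_i:=\sigma_T(x)\cap F_i$ are compact, so pick bounded open $V_i$ with piecewise smooth boundary $\Gamma_i$ such that $K_i\subseteq V_i\subseteq \overline{V_i}\subseteq U_i$, and define
$$x_i \;=\; -\frac{1}{2\pi i}\oint_{\Gamma_i} f_x(z)\,dz.$$
The Neumann expansion at infinity combined with Cauchy's theorem applied to $\Gamma_1\cup\Gamma_2$ (which encloses all of $\sigma_T(x)$) yields $x_1+x_2=x$. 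To check $\sigma_T(x_i)\subseteq \overline{V_i}$, for $w\notin \overline{V_i}$ set
$$g_i(w) \;=\; -\frac{1}{2\pi i}\oint_{\Gamma_i}\frac{f_x(z)}{z-w}\,dz;$$
using $(T-wI)f_x(z)=x+(z-w)f_x(z)$ together with $\oint_{\Gamma_i}dz/(z-w)=0$ for $w$ outside $\Gamma_i$, one finds $(T-wI)g_i(w)=x_i$, so $\C\setminus\overline{V_i}\subseteq \rho_T(x_i)$. A Cauchy deformation argument (using that $f_x$ is holomorphic off $\sigma_T(x)$ and that the singular set inside $U_i$ is exactly $K_i$) shows $x_i$ is independent of the admissible $V_i$, and intersecting over all such $V_i$ forces $\sigma_T(x_i)\subseteq K_i\subseteq F_i$. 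Directness of the sum follows from the preliminary fact: any $h\in H_T(F_1)\cap H_T(F_2)$ has $\sigma_T(h)\subseteq F_1\cap F_2=\emptyset$, so $h=0$.

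For (2), given $x\in H_T(F_1)$ and $y\in H_{T^*}(F_2^*)$ with local extensions $f_x$ and $f_y$, introduce
$$\phi_1(z)=\pe{f_x(z),y}\;\;(z\in\C\setminus F_1),\qquad \phi_2(z)=\pe{x,f_y(\bar z)}\;\;(z\in\C\setminus F_2).$$
That $\phi_2$ is holomorphic in $z$ is because $f_y$ is $H$-valued analytic in its argument and the conjugate-linearity in the second slot of the inner product is undone by precomposing with $z\mapsto\bar z$. Since $\sigma(T^*)=\overline{\sigma(T)}$, for $z\in\rho(T)$ the adjoint identity $\pe{(T-zI)^{-1}x,y}=\pe{x,(T^*-\bar z I)^{-1}y}$ gives $\phi_1=\phi_2$ on $\rho(T)$. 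As $F_1\cap F_2=\emptyset$, the domains of $\phi_1$ and $\phi_2$ jointly cover $\C$, and the SVEP of $T$ and $T^*$ (uniqueness of analytic continuation) glues them into a single entire function $\phi$. Since $f_x(z)\to 0$ at infinity, so does $\phi$, and Liouville yields $\phi\equiv 0$. Reading off the Neumann coefficients of $\phi_1$ at infinity gives $\pe{T^n x,y}=0$ for all $n\geq 0$; taking $n=0$ and varying $y$ over $H_{T^*}(F_2^*)$ establishes $H_T(F_1)\subseteq H_{T^*}(F_2^*)^{\perp}$.

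The principal technical hurdle I anticipate lies in the localization step of (1): verifying that the vector $x_i$ produced by the contour integral is genuinely independent of the admissible shrinking neighborhood $V_i$, so that the intersection over all such $V_i$ legitimately places $\sigma_T(x_i)$ inside $F_i$ rather than in some fixed open neighborhood. Once this is settled, part (2) is essentially formal given SVEP, the only subtlety being that agreement on $\rho(T)$ must propagate to every connected component of $\C\setminus(F_1\cup F_2)$, which follows from the identity theorem together with the fact that the unbounded component contains $\{|z|>\|T\|\}\subseteq \rho(T)$.
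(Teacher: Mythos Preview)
The paper does not supply its own proof of this proposition; it simply quotes \cite[Proposition 2.5.1]{LN00} and remarks that the conjugation in the set $F_2^*$ arises from using the Hilbert-space adjoint rather than the Banach-space dual. Your argument is essentially the standard one that Laursen--Neumann give, so from the point of view of comparison there is nothing novel on either side: a Riesz--Dunford splitting of the local resolvent for (1), and a Liouville/duality argument for (2).

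Part (1) is correct as written. The point you flag as the main worry, namely that $x_i$ is independent of the admissible contour $\Gamma_i$, is indeed just Cauchy's theorem applied to $f_x$ on the region between two such contours, and the intersection $\bigcap \overline{V_i}=K_i$ holds because $K_i$ is compact. You do not state the easy inclusion $H_T(F_1)+H_T(F_2)\subseteq H_T(F_1\cup F_2)$, but that is immediate from $\sigma_T(x+y)\subseteq\sigma_T(x)\cup\sigma_T(y)$.

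In part (2) there is a genuine, if easily repaired, gap. You argue that $\phi_1=\phi_2$ on $\rho(T)$ and then invoke the identity theorem; but $\C\setminus(F_1\cup F_2)$ need not be connected, and your closing remark only secures agreement on the \emph{unbounded} component. A bounded component lying entirely inside $\sigma(T)$ is not reached by analytic continuation from $\rho(T)$. The fix is not to continue analytically at all but to compute directly: for any $z\in\C\setminus(F_1\cup F_2)$,
\[
\phi_1(z)=\pe{f_x(z),y}=\pe{f_x(z),(T^*-\bar z I)f_y(\bar z)}=\pe{(T-zI)f_x(z),f_y(\bar z)}=\pe{x,f_y(\bar z)}=\phi_2(z),
\]
so the two pieces agree pointwise on the entire overlap and glue to an entire function without any appeal to connectedness. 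With this correction your Liouville conclusion goes through as stated.
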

\noindent Note that the difference of the statement (2) and  Proposition 2.5.1 in \cite{LN00} is due to the con\-si\-deration of the Hilbert adjoint operator $T^*$. As a consequence, we have the following

\begin{coro}\label{adjuntos}
Let $T\in \EL(H)$ be such that both $T$ and $T^*$ have the SVEP. Suppose $F_1$ and $F_2$ are disjoint closed sets in $\C$ such that $H_T(F_1)$ and $H_{T^*}(F_2^*)$ are non zero linear manifolds. Then $\overline{H_T(F_1)}$ is a non trivial closed hyperinvariant subspace for $T$.
\end{coro}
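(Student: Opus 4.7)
The plan is to observe that this Corollary follows essentially directly from the preceding Proposition. Three things need to be checked for $\overline{H_T(F_1)}$: that it is closed and hyperinvariant for $T$, that it is non zero, and that it is a proper subspace of $H$. Closedness is automatic, and non-triviality from below is immediate from the hypothesis $H_T(F_1) \neq \{0\}$. Hyperinvariance follows because $H_T(F_1)$ is already a hyperinvariant linear manifold for $T$, as recalled in the text, and every operator in the commutant of $T$ is bounded and hence continuous, so it maps the closure of any invariant set into itself.

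The substantive step is to show that $\overline{H_T(F_1)} \neq H$. For this I would invoke part (2) of the Proposition applied to the disjoint closed sets $F_1$ and $F_2$, which gives $H_T(F_1) \subseteq H_{T^*}(F_2^*)^{\perp}$. The right-hand side is closed, so the inclusion persists after taking closures, and thus $\overline{H_T(F_1)} \subseteq H_{T^*}(F_2^*)^{\perp}$. By hypothesis, $H_{T^*}(F_2^*)$ contains some non zero vector $y$, hence $\overline{H_T(F_1)} \subseteq \{y\}^{\perp} \subsetneq H$, which is exactly the required properness. Notice that both SVEP hypotheses on $T$ and $T^*$ are used here implicitly, since they are the standing assumptions of the Proposition.

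I do not anticipate any real obstacle: this Corollary is a convenient repackaging of the hyperinvariance of local spectral manifolds together with the orthogonality furnished by part (2) of the Proposition for the Hilbert adjoint. The only care point is the switch between the dual-space adjoint used in \cite{LN00} and the Hilbert adjoint $T^*$, but this has already been addressed in the statement of the Proposition via the reflection $F_2^* = \{\overline{z}:\, z \in F_2\}$.
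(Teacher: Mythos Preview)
Your proposal is correct and matches the paper's intent: the Corollary is stated without proof, immediately after the Proposition, as a direct consequence of part~(2) together with the hyperinvariance of local spectral manifolds. Your argument is exactly the standard unpacking of that consequence.
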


We end this section by stating a result which ensures the existence of non zero vectors belonging to some particular spectral subspaces of operators in the class $(\RO)$.

\begin{thm}[Theorem 3.2, \cite{GG}] Let $\Lambda=(\lambda_n)\subset \mathbb{C}$ be a bounded sequence, $u = \sumn \al_ne_n$, $v= \sumn \beta_n e_n$ non zero vectors in $H$ and $T=D_\Lambda + u \otimes v$ a bounded linear operator in the class $(\RO)$. Assume $\sigma(T)$ is connected and both $\sigma_p(T)$ and $\sigma_p(T^*)$ are empty. Suppose further that there exists a closed, simple, piecewise differentiable curve $\gamma$ in $\C$ not intersecting $\Lambda$ such that
\begin{enumerate}
    \item [(i)] $\sigma(T)\cap \inte(\gamma) \neq \emptyset$.
    \item [(ii)] The map $$ \xi \in \gamma \mapsto \frac{1}{1+f_T(\xi)}$$ is well defined and continuous on $\gamma.$
    \item [(iii)] $$ \sumn \left( \int_\gamma \frac{|d\xi|}{|\lambda_n-\xi|}\right)^2|\al_n|^2 < \infty.$$
\end{enumerate}
Then, the spectral subspace $H_T(\overline{\inte(\gamma)})$ is non zero.
\end{thm}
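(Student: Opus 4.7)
The plan is to construct a nonzero vector $x\in H$ with $\sigma_T(x)\subseteq\overline{\inte(\gamma)}$ via a Riesz-projection analogue along $\gamma$, reinterpreted coordinate by coordinate so that it survives the possibility that $\gamma\not\subseteq\rho(T)$. Since $\sigma_p(T)$ and $\sigma_p(T^*)$ are empty, both $T$ and $T^*$ have SVEP, so local spectra behave well. Motivated by the identity $(T-\xi I)^{-1}u=(1+f_T(\xi))^{-1}(D_\Lambda-\xi I)^{-1}u$, valid wherever the left-hand side exists, define
$$x\;:=\;\sumn \alpha_n\left(\frac{1}{2\pi i}\int_\gamma\frac{d\xi}{(\lambda_n-\xi)(1+f_T(\xi))}\right)e_n.$$
Condition (ii) yields $M:=\sup_{\xi\in\gamma}|1/(1+f_T(\xi))|<\infty$; combined with (iii), the coordinatewise bound $|\pe{x,e_n}|\leq \frac{M|\alpha_n|}{2\pi}\int_\gamma|d\xi|/|\lambda_n-\xi|$ places $x$ in $H$.

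For $z$ in the unbounded component of $\C\setminus\gamma$, set the would-be local resolvent
$$f(z)\;:=\;\sumn\alpha_n\left(\frac{1}{2\pi i}\int_\gamma\frac{d\xi}{(\lambda_n-\xi)(1+f_T(\xi))(\xi-z)}\right)e_n,$$
which belongs to $H$ by the same bound (now divided by the distance from $z$ to $\gamma$) and is holomorphic on $\C\setminus\overline{\inte(\gamma)}$. The central identity $(T-zI)f(z)=x$ is verified by two ingredients: for the diagonal part, the partial fraction
$$\frac{\lambda_n-z}{(\lambda_n-\xi)(\xi-z)}=\frac{1}{\lambda_n-\xi}+\frac{1}{\xi-z}$$
expresses the $n$-th coordinate as the $e_n$-coefficient of $x$ plus a common multiple of $\alpha_n$; for the rank-one piece $\pe{f(z),v}u$, a Fubini exchange together with $f_T/(1+f_T)=1-1/(1+f_T)$ and $\frac{1}{2\pi i}\int_\gamma d\xi/(\xi-z)=0$ (as $z$ lies outside $\gamma$) exactly cancels that common multiple. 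Hence $(T-zI)f(z)=x$ on $\C\setminus\overline{\inte(\gamma)}$, and SVEP gives $\sigma_T(x)\subseteq\overline{\inte(\gamma)}$.

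The main obstacle is to show $x\neq 0$. If $x=0$, the absence of eigenvalues of $T$ propagates this to $f\equiv 0$ on the exterior of $\gamma$. Fixing $n$ and writing $\phi_n(\xi):=1/((\lambda_n-\xi)(1+f_T(\xi)))$ --- continuous and nowhere vanishing on $\gamma$ by (ii) --- the vanishing of the $n$-th coordinate of $f(z)$ in the exterior of $\gamma$ says the Cauchy-type integral of $\phi_n$ over $\gamma$ vanishes there, and by the Plemelj-Sokhotski jump formula this is equivalent to $\phi_n$ extending as a holomorphic function on all of $\inte(\gamma)$; in particular, $1/(1+f_T)$ admits such an analytic extension across $\gamma$. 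But the hypothesis $\sigma(T)\cap\inte(\gamma)\neq\emptyset$ produces either a point of $\Lambda'\cap\inte(\gamma)$ --- a non-isolated singularity of $1+f_T$ blocking any analytic extension --- or a zero of $1+f_T$ in $\inte(\gamma)\setminus\overline{\Lambda}$, yielding a pole of $1/(1+f_T)$. Either alternative contradicts analyticity, so $x\neq 0$ and $H_T(\overline{\inte(\gamma)})\neq\{0\}$.
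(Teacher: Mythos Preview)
This statement is quoted from \cite{GG} and is not proved in the present paper, so there is no proof here to compare against; I evaluate your argument on its own merits.

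Your construction of $x$ and of the local resolvent $f$, and the verification of $(T-zI)f(z)=x$ for $z$ in the exterior of $\gamma$, are correct and constitute the natural ``Riesz projection of $u$, rewritten coordinatewise'' approach.

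The genuine gap is in the argument that $x\neq 0$. Even granting the Plemelj--Sokhotski step (which for merely continuous boundary data on a piecewise $C^1$ curve is already delicate), your final contradiction does not follow. You assert that a point of $\Lambda'\cap\inte(\gamma)$, being ``a non-isolated singularity of $1+f_T$'', blocks any analytic extension of $1/(1+f_T)$. But a singularity of $1+f_T$ need not obstruct $1/(1+f_T)$: at every isolated $\lambda_n$ the series $f_T$ has a simple pole with nonzero residue, so $1/(1+f_T)$ extends there with value~$0$. Nothing you have written rules out that the holomorphic extension crosses $\overline{\Lambda}\cap\inte(\gamma)$ as well. (Your second alternative, a zero of $1+f_T$ in $\inte(\gamma)\setminus\overline{\Lambda}$, is vacuous: by Ionascu's theorem any such zero lies in $\sigma_p(T)$, which is empty.)

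A clean repair, bypassing boundary-value theory entirely, runs as follows. Set
\[
\Psi(w):=\frac{1}{2\pi i}\int_\gamma\frac{d\xi}{(1+f_T(\xi))(\xi-w)},\qquad w\notin\gamma,
\]
holomorphic on each component of $\C\setminus\gamma$. Your own formula gives $x_n=-\alpha_n\,\Psi(\lambda_n)$, so $x=0$ forces $\Psi(\lambda_n)=0$ for every $n$. Since $\sigma_p(T)=\emptyset$ forces $\sigma(T)=\Lambda'$, hypothesis~(i) yields a point of $\Lambda'$ in $\inte(\gamma)$ and hence infinitely many $\lambda_n\in\inte(\gamma)$ accumulating there; the identity theorem gives $\Psi\equiv 0$ on $\inte(\gamma)$. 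Your computation $\langle f(z),v\rangle=-\Psi(z)$, together with $f\equiv 0$ on the exterior, gives $\Psi\equiv 0$ there too. Thus the Cauchy transform of the continuous, nowhere-vanishing function $1/(1+f_T)$ is identically zero on all of $\C\setminus\gamma$; since $\gamma$ has planar measure zero, rational functions with poles off $\gamma$ are dense in $C(\gamma)$, and one concludes $1/(1+f_T)\equiv 0$ on $\gamma$, contradicting~(ii).
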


\medskip

In condition (iii), $|d\xi|$ denotes the arc length measure.

\section{Rank-one perturbations of diagonalizable normal operators} \label{sec2}

As pointed in the Introduction, the aim of this section is to prove the following theorem:

\begin{teorema}\label{main result}
Let $T = D_\Lambda + u\otimes v \in \EL(H)\setminus \mathbb{C}\,  Id_{H}$ be any rank-one perturbation of a diagonal normal operator $D_{\Lambda}$ with respect to an orthonormal basis $\mathcal{E}=\{e_n\}_{n\geq 1}$ where $u = \sumn \al_n e_n$ and $v = \sumn \beta_n e_n$. Then $T$ has non trivial closed hyperinvariant subspaces provided that either $u$ or $v$ have a Fourier coefficient which is zero or $u$ and $v$ have non zero Fourier coefficients and
\begin{equation}\label{sumabilidad}
 \sumn |\al_n|^2 \log \frac{1}{|\al_n|} + |\beta_n|^2 \log \frac{1}{|\beta_n|}<\infty.
\end{equation}
Moreover, if $T\in (\RO)$, $\sigma(T)$ is connected and $\sigma_p(T)\cup\sigma_p(T^*) = \emptyset$, it follows that $T$ does have non zero spectral subspaces which are not dense.
\end{teorema}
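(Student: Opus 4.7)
The plan is to reduce Theorem~\ref{main result} to the ``hard case'' where $T\in(\RO)$ has connected spectrum and both $\sigma_p(T)$ and $\sigma_p(T^*)$ are empty, and then apply \cite[Theorem~3.2]{GG} (the last theorem of the preliminaries) twice---once to $T$ and once to $T^*$---with curves enclosing disjoint pieces of $\sigma(T)$, finishing via Corollary~\ref{adjuntos}. The auxiliary reductions are routine: when some Fourier coefficient vanishes, Ionascu's result supplies an eigenvalue of $T$ or $T^*$ and hence a hyperinvariant subspace; when $T\notin(\RO)$ and is non-scalar, \cite[Proposition~2.1]{FJKP07} applies; when $\sigma_p(T)\ne\emptyset$ or $\sigma_p(T^*)\ne\emptyset$ I take the eigenspace (respectively, the orthogonal complement of the $T^*$-eigenspace); and when $\sigma(T)$ is disconnected I use the Riesz idempotent of a spectral component, which lies in $\{T\}''$ and therefore has hyperinvariant range. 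After these reductions both $T$ and $T^*$ have SVEP.

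In the hard case, since $T\in(\RO)$ the derived set $\Lambda'$ is not a singleton and $\sigma(T)\supseteq\Lambda'$ contains at least two distinct points, so I can pick two disjoint closed Jordan domains $F_1,F_2\subset\C$ each meeting $\sigma(T)$. The strategy is to produce closed Jordan curves $\gamma_1$ around $F_1$ and $\gamma_2$ around $F_2^*$ to which \cite[Theorem~3.2]{GG} applies for $T$ and $T^*$ respectively. Granting this, both $H_T(F_1)$ and $H_{T^*}(F_2^*)$ are non-zero, and Corollary~\ref{adjuntos} delivers a proper closed hyperinvariant subspace $\overline{H_T(F_1)}$ of $T$; it is automatically non-dense because it lies inside $H_{T^*}(F_2^*)^\perp$, settling both conclusions of the theorem simultaneously.

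The core technical step is the construction of $\gamma_1$ (with $\gamma_2$ built symmetrically, swapping $\al_n\leftrightarrow\overline{\beta_n}$ and $\lambda_n\leftrightarrow\overline{\lambda_n}$). I would take a one-parameter family of rectangular translates $\gamma_t$ enclosing $F_1$ and use Fubini-style averaging in $t$ to pick a single good curve. Condition~(iii) of \cite[Theorem~3.2]{GG} is the easier half: a direct computation shows $\int\bigl(\int_{\gamma_t}|d\xi|/|\lambda_n-\xi|\bigr)^{2}\,dt\le C$ uniformly in $n$, so integrating against $|\al_n|^{2}$ and using Fubini yields a choice of $t$ for which (iii) holds, needing only $u\in H$. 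The serious difficulty lies in condition~(ii): one needs $f_T$ to be well-defined, continuous, and nowhere equal to $-1$ on $\gamma_t$. Non-vanishing of $1+f_T$ is then automatic from $\sigma_p(T)=\emptyset$ via Ionascu's characterization, provided $f_T$ is defined at every point of $\gamma_t$, i.e., $\sum|\al_n|^{2}/|\lambda_n-\xi|^{2}<\infty$ uniformly on $\gamma_t$. This is where hypothesis~\eqref{sumabilidad} plays its essential role: truncating the Cauchy kernel at the natural scale $|\al_n|$ and averaging over $t$ converts the offending divergent potential into $\sum|\al_n|^{2}\log(1/|\al_n|)$, which is finite by hypothesis, so a further Fubini argument produces a translate on which the truncated potential is uniformly bounded and $f_T$ extends continuously.

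The main obstacle is the simultaneous satisfaction of (ii) and (iii) on one single curve; the matching between the logarithmic growth of the Cauchy potential near $\Lambda$ and the logarithmic weight in \eqref{sumabilidad} is precisely what pins down \eqref{sumabilidad} as the natural threshold of this method, consistent with the remark in the introduction that the pairs $(2,r),(r,2)$ with $r\in(1,2]$ sit on the boundary of what the argument can reach. Once both curves are in hand, the passage from non-zero spectral subspaces to a non-trivial closed hyperinvariant subspace via Corollary~\ref{adjuntos} is immediate.
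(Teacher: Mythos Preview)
Your reductions and the overall architecture (apply \cite[Theorem~3.2]{GG} to $T$ and to $T^*$ on curves enclosing disjoint spectral pieces, then invoke Corollary~\ref{adjuntos}) match the paper's, but there is a genuine gap in your verification of condition~(ii). You assert that ``non-vanishing of $1+f_T$ is then automatic from $\sigma_p(T)=\emptyset$ via Ionascu's characterization, provided $f_T$ is defined at every point of $\gamma_t$, i.e., $\sum|\al_n|^{2}/|\lambda_n-\xi|^{2}<\infty$.'' These two conditions are not the same: $f_T(\xi)$ is defined (absolutely) as soon as $\sum|\al_n\overline{\beta_n}|/|\lambda_n-\xi|<\infty$, which via Cauchy--Schwarz only needs $\sum|\al_n|^2/|\lambda_n-\xi|<\infty$ and the analogous sum for $\beta_n$; but Ionascu's condition~(ii) requires the much stronger $\sum|\al_n|^2/|\lambda_n-\xi|^{2}<\infty$. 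At a point $\xi\in\gamma_t\cap\Lambda'$ one may perfectly well have $f_T(\xi)$ absolutely convergent, $\xi\notin\Lambda\cup\sigma_p(T)$, and yet $1+f_T(\xi)=0$, because Ionascu's theorem gives no information when his condition~(ii) fails. Your averaging/truncation sketch does not recover the exponent~$2$ from hypothesis~\eqref{sumabilidad}: the planar kernel $|z|^{-2}$ is not locally integrable, so integrating $\sum|\al_n|^2/|\lambda_n-\xi|^{2}$ over a one-parameter family of curves diverges term by term, and no truncation at scale $|\al_n|$ repairs this without losing control of the remainder.

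The paper closes this gap by an idea you do not use: the quasisimilarity trick of \cite[Theorem~2.5]{FJKP08} (stated here as Theorem~\ref{prop quasisimilares}). Assuming $1+f_T(\xi_0)=0$ at some $\xi_0=x_0+iy_0$ with $x_0\in\Omega(T)$, one only has $\sum|\al_n|^2/|\lambda_n-\xi_0|<\infty$ (exponent~$1$), which is exactly the condition $u\in\textnormal{ran}\,(D_\Lambda-\xi_0 I)^{1/2}$. Passing to the quasisimilar operator $S=(D_\Lambda-\xi_0 I)+((D_\Lambda-\xi_0 I)^{-1/2}u)\otimes((D_\Lambda^*-\overline{\xi_0}I)^{1/2}v)$ and computing $f_{S^*}(0)=\overline{f_T(\xi_0)}$, one finds that $0$ satisfies all three of Ionascu's conditions for $S^*$ (the exponent-$2$ condition for $S^*$ at $0$ reduces to the exponent-$1$ condition for $T$ at $\xi_0$), hence $0\in\sigma_p(S^*)=\sigma_p(T^*)$, contradicting the standing assumption. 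This halving of the exponent is precisely what makes \eqref{sumabilidad} sufficient, and it is the missing ingredient in your proposal.
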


In order to prove Theorem \ref{main result}, we require some preliminary work. First, we consider the following lemma which generalizes Lemma 2.1 in \cite{FX12}:

\begin{lema}\label{lema fx}
Let $\Lambda = (\lambda_n)_n \subset \C$ be any bounded sequence of distinct complex numbers and $\{\al_n\}_{n\geq 1} \subset \C\setminus \{0\}$ a sequence  such that
\begin{equation}\label{serie logaritmo}
 \sumn  |\al_n|^2\, \log \frac{1}{|\al_n|} < \infty.
\end{equation} Then, for almost every $x \in \R\setminus\{ \PR(\lambda_n) : n \in \N\}$
 \begin{equation}\label{lema 2}
    \sumn \frac{|\al_n|^2}{|\PR(\lambda_n)-x|} < \infty.
\end{equation}
\end{lema}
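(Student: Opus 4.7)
The plan is to show that the exceptional set
$$E := \Bigl\{ x \in \R\setminus\{\PR(\lambda_n)\}_{n\geq 1}:\ \sumn \frac{|\al_n|^2}{|\PR(\lambda_n)-x|} = +\infty \Bigr\}$$
has Lebesgue measure zero, via a Tonelli computation in which the truncation scale is calibrated so that the resulting integral bound is exactly absorbed by the logarithmic strengthening of $\ell^2$-summability in \eqref{serie logaritmo}.

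First I would reduce to a bounded interval. Since $\Lambda$ is bounded, pick $M>0$ with $|\PR(\lambda_n)| \leq M$ for every $n$. Observe that \eqref{serie logaritmo} forces $|\al_n|\to 0$, so $\log(1/|\al_n|) \geq 1$ eventually and in particular $\sumn |\al_n|^2 < \infty$. For $|x| \geq M+1$ each summand is dominated by $|\al_n|^2$, so the series converges trivially there; hence it suffices to prove that $|E \cap I| = 0$, where $I := [-M-1, M+1]$.

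Next, for a parameter $\delta > 0$, write $F(x) := \sumn |\al_n|^2/|\PR(\lambda_n)-x|$ and split $F = F_{1,\delta} + F_{2,\delta}$ by putting the $n$-th term in $F_{1,\delta}$ when $|\PR(\lambda_n)-x| \geq \delta |\al_n|^2$ and in $F_{2,\delta}$ otherwise. By Tonelli and the change of variables $y = x - \PR(\lambda_n)$,
$$\int_I F_{1,\delta}(x)\,dx \ \leq\ \sumn |\al_n|^2 \int_{\delta|\al_n|^2 \leq |y|\leq 2M+1} \frac{dy}{|y|} \ \leq\ 4 \sumn |\al_n|^2 \log \frac{1}{|\al_n|} + C_{M,\delta} \sumn |\al_n|^2,$$
which is finite thanks to \eqref{serie logaritmo}; hence $F_{1,\delta}(x) < \infty$ for a.e.\ $x \in I$. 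On the other hand, the support of $F_{2,\delta}$ is contained in $\bigcup_{n\geq 1} B(\PR(\lambda_n),\,\delta|\al_n|^2)$, a set of Lebesgue measure at most $2\delta \sumn |\al_n|^2$.

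For any $x \in I$ lying outside this union at which $F_{1,\delta}(x)<\infty$, one has $F(x) = F_{1,\delta}(x) < \infty$. Consequently
$$|E \cap I| \ \leq\ 2\delta \sumn |\al_n|^2 \quad \text{for every } \delta>0,$$
and letting $\delta \to 0^+$ yields $|E \cap I| = 0$. The delicate point, and the only place where the weighted hypothesis is genuinely used, is the calibration of the cutoff: the scale $\delta|\al_n|^2$ turns the bare integral $\int dy/|y|$ into a factor $\log(1/|\al_n|^2) + O(1)$, which is precisely what \eqref{serie logaritmo} absorbs; any substantially larger cutoff would cost too much measure in $F_{2,\delta}$, while any smaller one would make the $F_{1,\delta}$ integral diverge.
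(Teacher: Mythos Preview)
Your argument is essentially identical to the paper's: the same cutoff at scale $\delta|\alpha_n|^2$, the same Tonelli/Monotone Convergence integral bound producing the $\log(1/|\alpha_n|)$ factor, the same measure estimate on the union of bad intervals, and the same passage to the limit in the parameter. One small slip: condition \eqref{serie logaritmo} alone does \emph{not} force $|\alpha_n|\to 0$ (take $\alpha_n\equiv 1$, for which every term vanishes), so $\sum_n|\alpha_n|^2<\infty$ cannot be deduced from it and must be taken as a standing hypothesis --- the paper's own proof also uses it tacitly when choosing $\delta$, and in every application in the paper the $\alpha_n$ are Fourier coefficients of a vector in $H$.
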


Though the proof is inspired by that of \cite[Lemma 2.1]{FX12}, the conclusion is strengthened.

\begin{proof} Without loss of generality, we may assume that the clousure  $\overline{\Lambda}$ is contained in the unit disc $\D$. Accordingly, the series in \eqref{lema 2} converges for every $|x|> 1.$ Denote by $\Theta$ the set of $x \in [-1,1]\setminus \{\PR(\lambda_n): n\in \N\}$ such that
$$\sumn \frac{|\al_n|^2}{|\PR(\lambda_n)-x|}$$
diverges. We will show that the Lebesgue measure of $\Theta$ is zero.

Fix $0<\varepsilon< 1$ and let $\delta > 0$ be so that
$$ 2\delta \sumn  |\al_n|^2< \varepsilon.$$
For each $n \in \N$ we define the intervals
$$I_n = [\PR(\lambda_n)-\delta|\al_n|^2, \PR(\lambda_n)+ \delta |\al_n|^2]\cap [-1,1]$$
and the non-negative functions
$$f_n(x) := \frac{|\al_n|^2}{|\PR(\lambda_n)-x|}\chi_{[-1,1]\setminus I_n}(x), \qquad (x\in [-1,1]),$$
where $\chi_{[-1,1]\setminus I_n}$ denotes the characteristic function of the set $[-1,1]\setminus I_n$ .

In order to estimate $\int_{[-1,1]} f_n (x) \, dx $, we observe that
for any $0<a<1$ and $b<a$,
$$\int_{a<x<1} \frac{1}{x-b}\, dx= \log(1-b)+\log \frac{1}{a-b}.$$

\noindent This along with a bit of computation yields that
\begin{equation}
\begin{split}
\int_{[-1,1]} f_n (x) \, dx &
\leq  |\al_n|^2 \left( 2\, \log \frac{1}{\delta |\al_n|^2} + \log \left |1-\PR(\lambda_n)^2 \right| \right)  \\
\\ &\leq  M |\al_n|^2\Big (1+ \log \frac{1}{|\al_n|}\Big )\label{bound}
\end{split}
\end{equation}
for some constant $M > 0$  (here we have used that $\overline{\Lambda} \subset \D$).

Now, define the function $$F(x) = \sumn f_n(x), \qquad (x\in [-1,1]).$$
Upon applying the Monotone Convergence Theorem along with \eqref{bound} and \eqref{serie logaritmo} we deduce that the integral
$$\int_{[-1,1]} F(x)dx$$
is finite. Accordingly $F(x)$ is finite for almost every $x \in [-1,1].$

If $\Omega = \bigcup_{n=1}^\infty I_n$, we observe that its Lebesgue measure satisfies
$$m(\Omega) \leq \sumn m(I_n) \leq 2 \delta \sumn |\al_n|^2 < \varepsilon.$$
Note that if $x \in [-1,1]\setminus \Omega,$ then $$ \sumn \frac{|\al_n|^2}{|\PR(\lambda_n)-x|} = F(x),$$ so $\Theta \subset \Omega \cup \{x\in[-1,1] : F(x) = \infty\}$. Since $m(\{x\in[-1,1] : F(x) = \infty \}) = 0$, we deduce that $m(\Theta) < \varepsilon$  for every arbitrary $\varepsilon>0$. Thus $m(\Theta)= 0,$ and the proof is finished.
\end{proof}

Likewise, we will make use of Lemma 4.3 in \cite{GG} which we state for the sake of completeness.

\begin{lema}\label{lema logaritmo}\cite[Lemma 4.3]{GG} Let $D_\Lambda$ be a bounded, diagonal operator associated to $\Lambda = (\lambda_n)_{n\in\N}$ with respect to an orthonormal basis $\mathcal{E}=\{e_n\}_{n\in\N}$. Assume, further, that $\Lambda$ consists of different eigenvalues of multiplicity one and $u = \sumn \al_ne_n$ is a non zero vector in $H$. Then, for almost every $x \in \R\setminus \{\PR(\lambda_n) : n \in \N\}$ the series \begin{equation}\label{convergencia log}
    \sumn (\log |\PR(\lambda_n)-x|)^2|\al_n|^2
\end{equation}
converges.
\end{lema}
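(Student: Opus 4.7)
The plan is a routine Tonelli argument on a bounded interval, exploiting the local integrability of $(\log|y|)^2$ near $0$.

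First I would record two standing facts. Since $D_\Lambda$ is bounded, $M := \sup_n |\PR(\lambda_n)| < \infty$, and since $u \in H$, the Fourier coefficients satisfy $\sumn |\al_n|^2 < \infty$. Fix any $R > M$. By Tonelli applied to the non-negative integrand,
$$\int_{-R}^R \sumn (\log|\PR(\lambda_n) - x|)^2 |\al_n|^2 \, dx = \sumn |\al_n|^2 \int_{-R}^R (\log|\PR(\lambda_n) - x|)^2 \, dx.$$

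Next, in each inner integral perform the change of variables $y = x - \PR(\lambda_n)$. Because $|\PR(\lambda_n)| \le M < R$, the new domain $I_n := [-R - \PR(\lambda_n), R - \PR(\lambda_n)]$ is contained in the common window $[-2R, 2R]$. The function $y \mapsto (\log|y|)^2$ is integrable on $[-2R,2R]$ (near $y=0$ the substitution $y = e^{-t}$ gives $\int_0^1 (\log y)^2 \, dy = 2$), so there is a constant $C = C(R) > 0$, independent of $n$, with $\int_{I_n} (\log|y|)^2 \, dy \le C$. Consequently the double integral above is bounded by $C \sumn |\al_n|^2 < \infty$. Thus the non-negative integrand $\sumn (\log|\PR(\lambda_n) - x|)^2 |\al_n|^2$ is finite for a.e. $x \in [-R,R]$. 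Taking the union over $R = 1, 2, 3, \dots$ yields almost-everywhere finiteness on all of $\R$, and since $\{\PR(\lambda_n) : n \in \N\}$ is countable, hence Lebesgue-null, removing it leaves the stated conclusion.

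There is no substantive obstacle here: the lemma is essentially a Fubini/Tonelli swap backed by a uniform-in-$n$ bound on the shifted logarithmic integrals. The only point that deserves a touch of care is that uniform bound, which follows purely from the boundedness of $\Lambda$ (ensuring that all translates land in the common compact window $[-2R,2R]$ where $(\log|y|)^2$ is integrable). The hypotheses that the $\lambda_n$ are distinct of multiplicity one, and that $u \ne 0$, are not needed for the argument itself; they are inherited from the ambient $(\RO)$ setting in which the lemma will be applied.
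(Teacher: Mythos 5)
Your argument is correct. The paper does not reprove this lemma---it imports it verbatim from \cite[Lemma 4.3]{GG}---but your Tonelli computation is the standard and expected route: the key points, namely that $(\log|y|)^2$ is locally integrable and that boundedness of $\Lambda$ keeps all the shifted integration windows inside one fixed compact set, are exactly what make the sum--integral swap close the argument, and every step checks out (including your remark that the multiplicity-one and $u\neq 0$ hypotheses are not actually used). It is worth noting the contrast with the paper's proof of Lemma~\ref{lema fx}, which treats the kernel $1/|y|$: there local integrability fails, so the same Tonelli scheme only works after excising the small intervals $I_n$ about the points $\PR(\lambda_n)$ and controlling the measure of their union --- a complication you correctly recognize is unnecessary for the logarithmic kernel.
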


With both lemmas at hand, we introduce the following definition of the \emph{relevant set} of
$\PR(\sigma(T))=\{\PR(z):\; z\in \sigma(T)\}$ for operators $T\in (\RO)$ as follows:

\begin{defi}\label{definition relevant set }
Let $T = D_\Lambda + u \otimes v \in (\RO)$ where $\Lambda = (\lambda_n)_{n\in\N}$ is bounded sequence in $\C$ and $u = \sumn \al_ne_n$ and $v = \sumn \beta_n e_n$ are non zero vectors in $H$. Denote by $$a = \min\limits_{\lambda \in \sigma(T)} \PR(\lambda), \quad \text{ and } \quad b = \max\limits_{\lambda \in \sigma(T)} \PR(\lambda).$$ We define the \emph{relevant set} of $T$ as
\begin{equation*}
\Omega(T)=  \Big \{ x \in \PR(\sigma(T))\cap (a,b) :\; \sumn \frac{|\al_n|^2}{|\PR(\lambda_n)-x|} + \frac{|\beta_n|^2}{|\PR(\lambda_n)-x|} < \infty \Big \}.
\end{equation*}
\end{defi}

\medskip

Note that the set $\Omega(T)$, if it is non-empty, consists of points of the closed set $\PR(\sigma(T))$ such that both summability conditions \eqref{lema 2} and \eqref{convergencia log} hold for the Fourier coefficients of $u$ and $v$. Likewise, observe that if $T\in (\RO)$, the spectrum $\sigma(T)$ is not reduced to a singleton. Accordingly, if $T\in (\RO)$, $\sigma(T)$ is connected and is not contained in any vertical line and, in addition,
$$\sumn |\al_n|^2 \log\frac{1}{|\al_n|} + |\beta_n|^2 \log \frac{1}{|\beta_n|}< \infty,$$
then
$\PR(\sigma(T))$ is infinite and by Lemma  \ref{lema fx} almost every point in $(a,b)$ belongs to $\Omega(T)$. Consequently, in such a case the relevant set $\Omega(T)$ has infinitely many points as well.

Next result provides a sufficient condition to ensure the existence of non zero vectors belonging to some specific spectral subspaces associated to  operators in the class $(\RO)$.

\begin{teorema}\label{proposicion}
Let $T=D_{\Lambda} + u\otimes v \in (\RO)$  where $\Lambda = (\lambda_n)_{n\in\N}$ is bounded sequence in $\C$, $u = \sumn \al_ne_n$ and $v = \sumn \beta_n e_n$ are non zero vectors in $H$. Assume that $\sigma(T)$ is connected,  does not lie in any vertical line and both $\sigma_p(T)$ and $\sigma_p(T^*)$ are empty sets.  For each $x \in \Omega(T)$ denote by
$$F_x^+ := \{\lambda \in \sigma(T) : \PR(\lambda) \geq x\},$$
$$F_x^- := \{\lambda \in \sigma(T) : \PR(\lambda) \leq x\}.$$
Suppose that there exists $\x \in \Omega(T)$ satisfying
\begin{equation}\label{ceros prop}
    1 +f_T(\x +iy) \neq 0 \quad \textnormal{for every }\x +iy \in \sigma(T).
\end{equation}
Then $H_T(F_\x^+)$ and $H_T(F_\x^-)$ are both non zero spectral subspaces.
\end{teorema}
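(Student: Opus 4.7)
The plan is to apply Theorem~3.2 of \cite{GG} to two simple closed curves enclosing, respectively, the portions of $\sigma(T)$ lying to the right and to the left of the vertical line $L_\x:=\{\x+iy:y\in\R\}$. Fix $R>0$ with $\sigma(T)\subset\{|z-\x|<R\}$, and let $\gamma^+$ be the boundary of the half-disk $\{z\in\C:\PR(z)\geq\x,\;|z-\x|\leq R\}$ and $\gamma^-$ the boundary of the symmetric left half-disk. Both are simple and piecewise differentiable. Since $\x\in\Omega(T)$ forces $\PR(\lambda_n)\neq\x$ for every $n$, the vertical segment meets no point of $\Lambda$, and the arcs meet no point of $\Lambda$ by the choice of $R$. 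Hypothesis (i) of Theorem~3.2 holds because $\PR(\sigma(T))$ is a closed interval $[a,b]$ with $a<\x<b$ (the spectrum being connected, not lying in a vertical line, and $\x\in(a,b)$ by the definition of $\Omega(T)$), so $\sigma(T)$ meets both open half-planes determined by $L_\x$.

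The main obstacle is hypothesis (ii), namely that $\xi\mapsto 1/(1+f_T(\xi))$ be well-defined and continuous on $\gamma^\pm$. On the arcs, for $R$ large $|\lambda_n-\xi|$ is bounded below and $|f_T(\xi)|\to 0$ as $R\to\infty$, so $f_T$ is analytic and $1+f_T\neq 0$ there. On the vertical segment, the estimate $|\lambda_n-(\x+iy)|\geq|\PR(\lambda_n)-\x|$ together with Cauchy--Schwarz and the definition of $\Omega(T)$ yields
\[
\sumn\frac{|\al_n\overline{\beta_n}|}{|\lambda_n-(\x+iy)|}\leq \frac{1}{2}\sumn\frac{|\al_n|^2+|\beta_n|^2}{|\PR(\lambda_n)-\x|}<\infty
\]
uniformly in $y$, so the Weierstrass M-test shows $f_T$ is defined and continuous on $L_\x$. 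To verify $1+f_T(\x+iy)\neq 0$ for every $y$, I split cases: if $\x+iy\notin\overline{\Lambda}$, Ionascu's description $\sigma(T)=\Lambda'\cup\{z\notin\overline{\Lambda}:1+f_T(z)=0\}$ together with hypothesis \eqref{ceros prop} rules out $1+f_T(\x+iy)=0$; if $\x+iy\in\Lambda'\setminus\Lambda$, then $\x+iy\in\sigma(T)$ and \eqref{ceros prop} applies directly.

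For hypothesis (iii) I would estimate $\int_{\gamma^+}|d\xi|/|\lambda_n-\xi|\leq C_1+C_2\bigl|\log|\PR(\lambda_n)-\x|\bigr|$, with $C_1$ absorbing the (uniformly bounded) arc contribution; the required sum is therefore controlled by $\sumn|\al_n|^2\bigl(\log|\PR(\lambda_n)-\x|\bigr)^2$. Since $\Lambda$ is bounded and the elementary inequality $(\log a)^2\leq D+B/a$ holds on any interval $(0,A]$, combining $\sumn|\al_n|^2/|\PR(\lambda_n)-\x|<\infty$ (the defining property of $\Omega(T)$) with $\|u\|^2<\infty$ gives convergence. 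Hypothesis (iii) is then verified for $\gamma^+$, and Theorem~3.2 of \cite{GG} yields $H_T(\overline{\inte(\gamma^+)})\neq\{0\}$. Since $\overline{\inte(\gamma^+)}\cap\sigma(T)\subseteq F_\x^+$, the identity $H_T(E)=H_T(E\cap\sigma(T))$ recorded in the preliminaries gives $H_T(\overline{\inte(\gamma^+)})\subseteq H_T(F_\x^+)$, so $H_T(F_\x^+)\neq\{0\}$. The same argument applied to $\gamma^-$ yields $H_T(F_\x^-)\neq\{0\}$.
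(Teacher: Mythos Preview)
Your proof is correct and follows essentially the paper's route: both apply Theorem~3.2 of \cite{GG} to a half-plane-type contour through $\x$, verifying condition~(ii) via the Weierstrass M-test together with Ionascu's description and hypothesis~\eqref{ceros prop}, and condition~(iii) via the logarithmic estimate for the segment integral. Your explicit use of the elementary bound $(\log a)^2\leq D+B/a$ on $(0,A]$ to deduce the log-squared convergence directly from membership in $\Omega(T)$ makes transparent a step the paper only records in the remark following Definition~\ref{definition relevant set }.
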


Observe that the hypothesis on the spectra not lying on any vertical line is achievable by translating and multiplying $T$ by appropriated scalars and it is not a constraint  from the standpoint of the existence of invariant subspaces: it is imposed to ensure that $\Omega(T)$ is non-empty. The proof of Theorem \ref{proposicion} borrows some ideas from that of \cite[Theorem 4.1]{GG}.

\begin{proof} Without loss of generality, we may assume that $\overline{\Lambda} \subset \D_+ := \D\cap \{z \in \C: \textnormal{Im} \ z > 0\}$. In particular, it follows that $\sigma(T)\subset \D_+.$ This assumption is harmless regarding condition \eqref{ceros prop}.

Let $\x \in \Omega(T)$ satisfying \eqref{ceros prop} and let us show that $H_T(F_\x^+)$ is non zero (the proof for $H_T(F_\x^-)$ is similar).

First, we observe that since $\sigma(T)$ is connected, the segment $[a,b]=\PR(\sigma(T)).$ Moreover, since $\sigma(T)$ is not contained in any vertical line, $a <\x< b.$ Consequently, $F_\x^+$ is a proper, closed subset of $\sigma(T).$

Now, let $\ell_\x$ be the vertical segment passing through $\x$ with endpoints on the unit circle $\T$ and denote by
$$A_\x^+ := \{e^{i\theta} \in \T: \PR(e^{i\theta}) \geq \x \}.$$
Define $\gamma_\x^+ := \ell_\x \cup A_\x^+$. Clearly $\gamma_\x^+$ is a closed, simple, rectifiable, piecewise differentiable curve that does not intersect $\Lambda$. Moreover, $F_\x^+ = \overline{\inte(\gamma_\x^+)}\cap\sigma(T)$ so, in order to derive the desired statement, it suffices to check the conditions in \cite[Theorem 3.2]{GG} (stated also in the preliminary section).

In order to check condition (i) of \cite[Theorem 3.2]{GG}, note that $a<\x< b$ and $\sigma(T)$ is connected, so $\inte(\gamma_\x^+)$, that is, the set of points in $\mathbb{C}$ with index $1$ with respect to $\gamma_\x^+$,  intersects $\sigma(T)$.

To show condition (ii) of \cite[Theorem 3.2]{GG}, we first prove that
$$\xi \in \gamma_\x^+ \mapsto f_T(\xi) = \sumn \frac{\al_n\overline{\beta_n}}{\lambda_n-\xi}$$
is a well defined and continuous map. Observe that $f_T$ is a holomorphic function on $\C\setminus \overline{\Lambda}$ and $\gamma_\x^+$ does not intersect $\Lambda$, so we are required to show that $f_T$ is well defined and is continuous on $\gamma_\x^+ \cap \sigma(T)$. For every $\xi_\x = \x+iy \in \sigma(T)$ we have
\begin{equation*}
           | f_T(\xi_\x)|  \leq  \sumn \frac{|\al_n\beta_n|}{|\lambda_n-\xi_\x|} \leq \sumn \frac{|\al_n\beta_n|}{|\PR(\lambda_n)-\x|} \leq \left( \sumn \frac{|\al_n|^2}{|\PR(\lambda_n)-\x|} \right)^{1/2}\left( \sumn \frac{|\beta_n|^2}{|\PR(\lambda_n)-\x|} \right)^{1/2}
\end{equation*}
Since $\x$ belongs to the relevant set $\Omega(T)$, the series in the right hand side above converge. Hence,  $f_T$ is well defined and, as an application of the  Weierstrass M-test, it is continuous on $\gamma_\x^+\cap\sigma(T)$.

Now, to complete checking condition (ii), it suffices to show that $1+f_T(\xi) \neq 0$ for every $\xi \in \gamma_\x^+.$ Observe that, since $T$ has no eigenvalues, Ionascu Theorem (see \eqref{autovalores}) yields that $f_T(\xi)+1 \neq 0$ for every $\xi \in \C\setminus \overline{\Lambda}.$ Likewise, the hypothesis \eqref{ceros prop} yields that $1+f_T(\xi) \neq 0$ for every $\xi \in \gamma_x^+\cap\sigma(T)$. So, (ii) holds.

Finally, we are required to check (iii), namely
$$ \sumn \left(\int_{\gamma_\x^+} \frac{|d\xi|}{|\lambda_n-\xi|} \right)^2|\al_n|^2 < \infty.$$
The proof runs mostly as in \cite[Theorem 4.1]{GG} since $\x$ belongs to $\Omega(T)$ and hence the series
$$\sumn \left (\log |\PR(\lambda_n)-\x|\right )^2|\al_n|^2$$
converges. We sketch it, with a few modifications, for the sake of completeness.

Denote $\zeta_0=\x+i\sqrt{1-\x^2}$. Observe that
\begin{equation}\label{distance condition}
d_{+}=\inf \left\{ \sqrt{1-\x^2} + \Impart(\lambda_n):\; n\in \N \right\}> \Impart(\zeta_0)
\end{equation}
since $\overline{\Lambda}$ is strictly contained in $\D_{+}$.
Now, for each $n \in \N$,
\begin{equation}\label{suma dos}
\int_{\gamma^+_{\x}} \frac{|d\xi|}{|\lambda_n-\xi|} = \int_{\ell_{\x}} \frac{|d\xi|}{|\lambda_n-\xi|} + \int_{A_{\x}^+} \frac{|d\xi|}{|\lambda_n-\xi|}.
\end{equation}
Clearly, the second summand in the right hand side of \eqref{suma dos} is bounded by $2\pi /\textnormal{ d }(\{\lambda_n: n \in \N\},\, A_{\x}^+ ),$ which is independent of $n$. Accordingly, it suffices to bound the first summand.

A computation yields that
\begin{eqnarray}\label{integral primera}
\int_{\ell_{\x}} \frac{|d\xi|}{|\lambda_n-\xi|}& =& \displaystyle
\log \left(\frac{|\lambda_n- \zeta_0| + \Impart\left (\lambda_n-\overline{\zeta_0}\right )}
{|\lambda_n- \overline{\zeta_0}| + \Impart\left (\lambda_n-\zeta_0\right )} \right).
\end{eqnarray}
Having in mind \eqref{distance condition}  and the fact that  $\overline{\Lambda}\subsetneq \mathbb{D}$  we deduce
$$
0<d_+\leq |\lambda_n- \zeta_0| + \Impart\left (\lambda_n-\overline{\zeta_0}\right )\leq 2(1-r(D_{\Lambda}))
$$
for every $n$, where $r(D_{\Lambda})$ denotes the spectral radius of $D_{\Lambda}$. Accordingly,
\begin{equation}\label{acotacion logaritmo numerador}
\log \left(|\lambda_n- \zeta_0| + \Impart\left (\lambda_n-\overline{\zeta_0}\right )\right )
\end{equation}
remains uniformly bounded by a constant $C > 0$ for every $n$.

Likewise, as in the proof of  \cite[Theorem 4.1]{GG}, there exists a constant $A$ independent of $n$ such that
\begin{equation}\label{acotacion logaritmo}
\Big | \log \left ( |\lambda_n- \overline{\zeta_0}| + \Impart\left (\lambda_n-\zeta_0\right ) \right ) \Big | \leq
\max \left \{A,4\Big |\,\log |\Rpart( \lambda_n)-\x|\, \Big| \right \}
\end{equation}
for every $n\in \N$. Consequently, from \eqref{acotacion logaritmo numerador} and \eqref{acotacion logaritmo}, it follows that for every $n \in \N$ \begin{equation}\label{acotacion integral}
	\int_{\ell_{\x}} \frac{|d\xi|}{|\lambda_n-\xi|} \leq C + \max\{A,4|\log(|\Rpart( \lambda_n)-\x)| \},
	\end{equation}
where $C$, $A$ are positive constants independently of $n$.

\medskip

From here, it follows that
$$ \sumn \left(\int_{\ell_{\x}} \frac{|d\xi|}{|\lambda_n-\xi|}\right)^2|\al_n|^2 \leq \sumn \big (C + \max\{A,4|\log(|\Rpart( \lambda_n)-\x|)| \}\big)^2|\al_n|^2,$$
and having in mind that $\x$ belongs to $\Omega(T)$,  the series in the right hand side is convergent. Hence,
$\displaystyle \sumn\left(\int_{\gamma^+_{\x}} \frac{|d\xi|}{|\lambda_n-\xi|}\right)^2|\al_n|^2$
converges as well showing that the condition (iii) in \cite[Theorem 3.2]{GG} also holds. Thus, $H_T(\overline{\inte(\gamma_\x^+)})$ is a non zero linear manifold and therefore, $H_T(F_\x^+)$ is also non zero as we wished to show.
\end{proof}

As an application of Theorem \ref{proposicion} we have the following:

\begin{teorema}\label{teorema}
Let $T = D_\Lambda + u \otimes v \in (\RO)$, where $u = \sumn \al_ne_n$ and $v = \sumn \beta_n e_n$ are non zero vectors in $H$. Assume that there exist $\x_1$ and $\x_2$ in the relevant set $\Omega(T)$ such that $\x_1> \x_2$ and
$$1+f_T(\x_1+iy) \neq 0 \quad \textnormal{for every} \ \x_1+iy \in \sigma(T),$$
$$1+f_{T}(\x_2+iy) \neq 0 \quad \textnormal{for every} \ \x_2+iy \in \sigma(T).$$
Then, $T$ has non trivial closed hyperinvariant subspaces.
\end{teorema}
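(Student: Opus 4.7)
The plan is to apply Theorem \ref{proposicion} twice, once to $T$ at $\x_1$ and once to $T^*$ at $\x_2$, and then to combine the outputs via Corollary \ref{adjuntos} to produce a proper closed hyperinvariant subspace. Before doing so, I would reduce to the case in which both $T$ and $T^*$ have empty point spectrum and $\sigma(T)$ is connected. Indeed, if $\mu\in\sigma_p(T)$ then $\ker(T-\mu I)$ is closed and hyperinvariant, and it is proper because the pairwise distinct diagonal entries of $D_\Lambda$ prevent $T$ from being a scalar multiple of the identity; the case $\mu\in\sigma_p(T^*)$ is analogous via orthogonal complements. If $\sigma(T)$ is disconnected, the Riesz idempotent associated to any nontrivial clopen piece of $\sigma(T)$ yields a closed hyperinvariant subspace. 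The existence of two distinct points $\x_1>\x_2$ in $\Omega(T)$ forces $\sigma(T)$ not to lie in a vertical line, so this remaining hypothesis of Theorem \ref{proposicion} comes for free.

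\medskip

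Assume henceforth that $\sigma_p(T)=\sigma_p(T^*)=\emptyset$ and that $\sigma(T)$ is connected; then both $T$ and $T^*$ enjoy the SVEP. Applying Theorem \ref{proposicion} to $T$ at $\x_1\in\Omega(T)$, together with the assumption $1+f_T(\x_1+iy)\neq 0$ for every $\x_1+iy\in\sigma(T)$, shows that the spectral subspace $H_T(F_{\x_1}^+)$ is non zero.

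\medskip

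For the adjoint I observe that $T^*=D_{\overline{\Lambda}}+v\otimes u$ belongs again to the class $(\RO)$, since conditions (i)--(iii) in the definition are stable under complex conjugation of $\Lambda$ and swapping of $u$ and $v$. Moreover $\sigma(T^*)=\{\overline{z}:z\in\sigma(T)\}$ is connected, the identity $\Rpart(\overline{\lambda_n})=\Rpart(\lambda_n)$ yields $\Omega(T^*)=\Omega(T)$, and a short computation gives
\[
f_{T^*}(z)\;=\;\sumn\frac{\beta_n\,\overline{\alpha_n}}{\overline{\lambda_n}-z}\;=\;\overline{f_T(\overline{z})}.
\]
Since $\x_2+iy\in\sigma(T^*)$ if and only if $\x_2-iy\in\sigma(T)$, the hypothesis $1+f_T(\x_2+iy)\neq 0$ for every $\x_2+iy\in\sigma(T)$ translates (via $y\mapsto -y$) into $1+f_{T^*}(\x_2+iy)\neq 0$ for every $\x_2+iy\in\sigma(T^*)$. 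Accordingly, Theorem \ref{proposicion} applied to $T^*$ at $\x_2$ yields that $H_{T^*}\bigl((F_{\x_2}^-)^*\bigr)$ is non zero, because the set playing the role of $F_{\x_2}^-$ for $T^*$ is precisely $(F_{\x_2}^-)^*$.

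\medskip

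Since $\x_1>\x_2$, the closed sets $F_{\x_1}^+$ and $F_{\x_2}^-$ are disjoint subsets of $\C$. Corollary \ref{adjuntos} then delivers the desired conclusion: $\overline{H_T(F_{\x_1}^+)}$ is a non trivial closed hyperinvariant subspace for $T$. The main subtlety in the plan is the symmetry argument required to pass the hypotheses of Theorem \ref{proposicion} from $T$ at $\x_1$ to $T^*$ at $\x_2$, namely the identification $\Omega(T^*)=\Omega(T)$ and the translation of the non-vanishing condition on the Borel series via $f_{T^*}(z)=\overline{f_T(\overline{z})}$; everything else is a direct bookkeeping application of the previously established machinery.
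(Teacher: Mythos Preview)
Your proof is correct and follows essentially the same route as the paper's: reduce to the case of empty point spectra and connected spectrum, apply Theorem \ref{proposicion} to $T$ at $\x_1$ and to $T^*$ at $\x_2$ (via the conjugation identity $f_{T^*}(z)=\overline{f_T(\overline z)}$), and conclude with Corollary \ref{adjuntos}. Your remark that the mere existence of two points in $\Omega(T)\subset(a,b)$ already forces $a<b$ and hence excludes the vertical-line case is in fact a slight streamlining over the paper, which invokes \cite[Corollary 6.14]{RR} to dispose of that case separately.
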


\begin{proof}
First, we may assume that $\sigma_p(T)\cup \sigma_p(T^*) = \emptyset$ and $\sigma(T)$ is connected, since otherwise $T$ has non trivial closed hyperinvariant subspaces. Likewise, since $T\in (\RO)$, the spectrum is not reduced to a singleton, and if $\sigma(T)$  is contained in a vertical line, \cite[Corollary 6.14]{RR} yields that $T$ has non trivial closed hyperinvariant subspaces. Accordingly, we may assume that $\sigma(T)$ does not lie in a vertical line and therefore $T$ meets the hypothesis of Theorem \ref{proposicion}.

Accordingly, it follows that $H_T(F_{\x_1}^+)$ is a non zero spectral subspace. In order to prove that it is non-dense, let us prove that $H_{T^*}(F_{\x_2}^-)$ is also a non zero spectral subspace for $T^*$ and Corollary \ref{adjuntos} will yield the result.
Upon applying Theorem \ref{proposicion} to $T^*$, it suffices to show that $1+ f_{T^*}(\x_2+iy) \neq 0$ for every $\x_2+iy \in \sigma(T^*)$ since $\x_2\in \Omega(T^*)$.

Note that for every $(\x_2+iy) \in \sigma(T^*)$, one has
$$\overline{1 + f_{T^*}(\x_2+iy)} = 1+ f_T(\x_2-iy).$$
Likewise, observe that $\x_2-iy \in \sigma(T)$ and by hypothesis $1+ f_T(\x_2-iy) \neq 0$. Thus, $H_{T^*}(F_{\x_2}^-)$ is a non zero linear manifold as we wished to prove.
\end{proof}

\smallskip

Last result in order to prove Theorem \ref{main result} is Theorem 2.5 in \cite{FJKP08} which deals with quasisimilar operators to operators in the class $(\RO)$. Recall that two operators $T,\, \tilde{T} \in \EL(H)$ are \textit{quasisimilar} if there exist operators $X,Y \in \EL(H)$ with $\ker X = \ker X^* = \ker Y = \ker Y^* = \{0\}$ satisfying $$ TX = X\tilde{T}, \quad YT = \tilde{T}Y.$$ Note that if $T$ and $\tilde{T}$ are quasisimilar and $T$ has a non trivial closed hyperinvariant subspace, so does $\tilde{T}$. Moreover, $\sigma_p(T)=\sigma_p(\tilde{T})$.

\begin{teorema}\label{prop quasisimilares}\cite[Theorem 2.5]{FJKP08}
Suppose $T=D_\Lambda + u \otimes v \in (\RO)$, $0 \in \Lambda'\setminus (\Lambda \cup \sigma_p(T)\cup \sigma_p(T^*)),$ and $D_\Lambda^{1/2}$ is any fixed square root of the normal operator $D_\Lambda.$ Suppose that $u$ belongs to the range of $D_\Lambda^{1/2}$. Then, $T$ and $\tilde{T}= D_\Lambda +(D_\Lambda^{-1/2}u)\otimes (D_\Lambda^{1/2})^*v$ are quasisimilar operators.
\end{teorema}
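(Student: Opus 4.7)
The task is to exhibit operators $X, Y \in \EL(H)$, each with trivial kernel and cokernel, satisfying $TX = X\tilde{T}$ and $YT = \tilde{T}Y$. The hypothesis $u\in \textnormal{ran}(D_\Lambda^{1/2})$ is exactly what makes the vector $D_\Lambda^{-1/2}u$, and hence the operator $\tilde T$, well defined, and it is also what makes the natural candidate for $X$ work.

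For the first intertwiner the plan is to take $X = D_\Lambda^{1/2}$, the diagonal operator with eigenvalues $\mu_n$ satisfying $\mu_n^2=\lambda_n$. The identity $TX=X\tilde T$ then reduces to a direct rank-one calculation using the two identities $(u\otimes v)S = u\otimes S^*v$ and $S(u\otimes v) = (Su)\otimes v$: both $TD_\Lambda^{1/2}$ and $D_\Lambda^{1/2}\tilde T$ collapse to $D_\Lambda^{3/2} + u\otimes ((D_\Lambda^{1/2})^*v)$, the step $D_\Lambda^{1/2} D_\Lambda^{-1/2} u = u$ being precisely what uses the range hypothesis on $u$. Since $0\notin\Lambda$, every $\mu_n$ is non-zero, so $X = D_\Lambda^{1/2}$ and its adjoint are diagonal with non-vanishing entries, both injective; hence $X$ is a quasi-affinity.

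The main obstacle is the construction of the second intertwiner $Y$ with $YT=\tilde T Y$. Purely diagonal candidates fail: for example $Y=(D_\Lambda^{1/2})^*$ produces the rank-two discrepancy
$$YT - \tilde T Y = \big((D_\Lambda^{1/2})^* u\big)\otimes v - \big(D_\Lambda^{-1/2}u\big)\otimes \big(|D_\Lambda|v\big),$$
which vanishes only if all $|\lambda_n|$ are equal. My plan is to correct this candidate by a finite-rank perturbation, seeking $Y$ of the form $(D_\Lambda^{1/2})^* + \sum_j a_j\otimes b_j$ with $a_j, b_j$ drawn from the vectors $u, v, D_\Lambda^{\pm 1/2}u, (D_\Lambda^{1/2})^*v$, so that the obstruction above cancels. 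A useful preliminary observation for such a correction is that the Borel series of $T$ and $\tilde T$ coincide,
$$f_{\tilde T}(z)=\sumn \frac{(D_\Lambda^{-1/2}u)_n\,\overline{((D_\Lambda^{1/2})^*v)_n}}{\lambda_n - z}=\sumn \frac{\al_n\overline{\beta_n}}{\lambda_n - z}=f_T(z),$$
so by Ionascu's theorem $\sigma(T)=\sigma(\tilde T)$ and, in particular, $0$ remains outside the point spectra of both operators. The quasi-affinity of the resulting $Y$ would then follow from $Y$ being a finite-rank perturbation of a diagonal operator with non-vanishing entries, together with the hypothesis $0\notin\sigma_p(T)\cup\sigma_p(T^*)$ that rules out $Y$ or $Y^*$ having non-trivial kernels.
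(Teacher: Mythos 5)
Your first intertwiner is correct and is exactly the one the paper uses: $X=D_\Lambda^{1/2}$ satisfies $TX=X\tilde T$ by the rank-one calculus you describe, and it is a quasiaffinity because $0\notin\Lambda$. The genuine gap is the second intertwiner: you never actually produce a $Y$ with $YT=\tilde TY$, only a search plan, and the plan starts from the wrong base point. Perturbing $(D_\Lambda^{1/2})^*$ by finite rank cannot be expected to work: your own discrepancy $\bigl((D_\Lambda^{1/2})^*u\bigr)\otimes v-\bigl(D_\Lambda^{-1/2}u\bigr)\otimes\bigl(|D_\Lambda|v\bigr)$ has left vectors $(D_\Lambda^{1/2})^*u$ and $D_\Lambda^{-1/2}u$ that are unrelated to each other and to any fixed finite list of correction vectors, and a correction $a\otimes b$ contributes terms $(\tilde Ta)\otimes b-a\otimes(D_\Lambda^*b)-\langle u,b\rangle\,a\otimes v$ whose left and right slots do not align with that discrepancy; there is no reason a finite sum of such terms cancels it, and carrying this out is precisely the part of the proof that is missing.

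The correct choice, which is the whole content of the paper's argument, is $Y=U:=D_\Lambda^{1/2}+(D_\Lambda^{-1/2}u)\otimes v$ (a rank-one perturbation of $D_\Lambda^{1/2}$, not of its adjoint). The point is the factorization
$$T=D_\Lambda^{1/2}\,U,\qquad \tilde T=U\,D_\Lambda^{1/2},$$
both identities being immediate from $(a\otimes b)S=a\otimes S^*b$, $S(a\otimes b)=(Sa)\otimes b$ and $D_\Lambda^{1/2}D_\Lambda^{-1/2}u=u$ (the range hypothesis). Then \emph{both} intertwining relations drop out at once: $TD_\Lambda^{1/2}=D_\Lambda^{1/2}UD_\Lambda^{1/2}=D_\Lambda^{1/2}\tilde T$ and $UT=UD_\Lambda^{1/2}U=\tilde TU$. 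Quasiaffinity of $U$ then follows from $T=D_\Lambda^{1/2}U$ and $\tilde T^*=(D_\Lambda^{1/2})^*U^*$ together with the hypothesis $0\notin\sigma_p(T)\cup\sigma_p(T^*)$ (your observation that $f_{\tilde T}=f_T$ is useful here to control $\sigma_p(\tilde T^*)$). Without exhibiting this $U$, or some other explicit second intertwiner, the proof is incomplete.
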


\smallskip

Observe that if $D_\Lambda \in \EL(H)$ is a diagonal operator with sequence of eigenvalues $\Lambda = \{\lambda_n\}_{n\geq 1} \subset \C\setminus (-\infty,0]$,  choosing $\sqrt{\quad } $ the principal value of the square root, the diagonal operator $D_{\sqrt{\Lambda}}$ is a square root of $D_\Lambda$. In particular, having in mind that $\Lambda^*=\{\overline{\lambda_n}: \lambda_n\in \Lambda\}$,  in this case $(D_{\sqrt{\Lambda}})^* = D_{\sqrt{\Lambda^*}}$  since $\overline{\sqrt{\lambda_n}}=\sqrt{\overline{\lambda_n}}$ for every $n\geq 1$.

\medskip

Now, we are in position to prove Theorem \ref{main result}.

\medskip

\begin{demode}\emph{Theorem \ref{main result}.} Without loss of generality,  we may assume that $T \in (\RO)$, $\sigma(T)$ is connected not lying in a vertical line and $\sigma_p(T)\cup \sigma_p(T^*)=\emptyset.$   In such a case, $\PR(\sigma(T))$ contains infinitely many points and since
$$\sumn |\al_n|^2 \log\frac{1}{|\al_n|} + |\beta_n|^2 \log \frac{1}{|\beta_n|}< \infty$$
by hypotheses, both Lemma  \ref{lema fx} and \ref{lema logaritmo} yield that the relevant set $\Omega(T)$ has infinitely many points as well.
We will show that for every $x \in \Omega(T)$
$$1+f_T(x+iy) \neq 0 \text{ for every } x+iy \in \sigma(T)$$
and hence $T$ would meet the hypothesis of Theorem \ref{teorema} and the statement of Theorem \ref{main result} will follow.

\smallskip

Assume, by contradiction, that there exists $x_0\in \Omega(T)$ such that for some $x_0+iy_0 \in \sigma(T)$ it holds $1+f_T(x_0+iy_0) = 0.$
Let us denote  $\xi_0= x_0+iy_0 \in \sigma(T)$ and consider the operator
$$T-\xi_0 I = D_\Lambda - \xi_0 I + u\otimes v=D_{\Lambda-\xi_0} + u\otimes v$$
Note that $T-\xi_0 I \in(\RO)$ and we may assume that no eigenvalue of $D_\Lambda-\xi_0 I$ lies on $(-\infty,0]$ by multiplying $T-\xi_0 I$ by an appropriate unimodular complex number $e^{i\theta} \in \T.$

Let $\sqrt{\quad } $ denote the principal value of the square root and consider $D_{\sqrt{\Lambda - \xi_0}}$ which is a fixed square root $(D_\Lambda - \xi_0 I)^{1/2}$ of $D_\Lambda - \xi_0 I$. Since $x_0 \in \Omega(T)$ we deduce that
$$ \sumn \frac{|\al_n|^2}{|\lambda_n - \xi_0|} \leq \sumn \frac{|\al_n|^2}{|\PR(\lambda_n)-x_0|} < \infty,$$
so the vector
$$\sumn \frac{\al_n}{\sqrt{\lambda_n - \xi_0}}e_n$$
belongs to $H$. In other words, $u$ belongs to the range of $D_{\sqrt{\Lambda - \xi_0}}$. Hence, the operator $T-\xi_0 I$ meets the hypothesis of Theorem \ref{prop quasisimilares} and therefore is quasisimilar to the operator
$$S:= (D_\Lambda - \xi_0 I) + ((D_\Lambda - \xi_0 I)^{-1/2}u)\otimes ((D^*_\Lambda - \overline{\xi_0} I)^{1/2}v ).$$
By taking adjoints,  $T^* - \overline{\xi_0}I$ is quasisimilar to
$$S^* = (D_\Lambda^* - \overline{\xi_0}I) +  ((D^*_\Lambda - \overline{\xi_0} I)^{1/2}v )\otimes ((D_\Lambda - \xi_0 I)^{-1/2}u).$$
Now, the goal will be proving that $0$ is an eigenvalue of $S^*$ upon applying Ionascu cha\-rac\-te\-rization \cite{I01} (see the statement in the preliminary section). As a consequence, $T^*- \overline{\xi_0}I$ will have 0 as an eigenvalue as well which will lead to a contradiction.

\smallskip

First, observe that $0\notin \Lambda^* - \overline{\xi_0}$ since $\xi_0=x_0+i y_0 \notin \Lambda$ because $x_0\in\Omega(T)$.

Secondly,  in order to show that $(D^*_\Lambda - \overline{\xi_0} I)^{1/2}v$ belongs to the range of $D^*_\Lambda - \overline{\xi_0}I$, note that
$$(D^*_\Lambda - \overline{\xi_0} I)^{1/2} v = \sumn (\overline{\lambda_n}-\overline{\xi_0})^{1/2}\beta_n e_n.$$
Hence,
$$ \sumn \frac{|\lambda_n - \xi_0||\beta_n|^2}{|\lambda_n-\xi_0|^2} = \sumn \frac{|\beta_n|^2}{|\lambda_n-\xi_0|} \leq  \sumn \frac{|\beta_n|^2}{|\PR(\lambda_n)-x_0|}$$
which converges since $x_0 \in \Omega(T)$.  Thus, $(D^*_\Lambda - \overline{\xi_0} I)^{1/2} v$ belongs to the range of $D^*_\Lambda - \overline{\xi_0}I$.

\smallskip

Finally, let us show that $1+f_{S^*}(0) = 0$. As
$$(D_\Lambda - \xi_0 I)^{-1/2}u = \sumn (\lambda_n - \xi_0)^{-1/2}\al_ne_n,$$
we have \begin{equation*}
    \begin{split}
       1+ f_{S^*}(0) &= 1+\sumn \frac{(\overline{\lambda_n}-\overline{\xi_0})^{1/2}\beta_n (\overline{\lambda_n} - \overline{\xi})^{-1/2}\overline{\al_n}}{\overline{\lambda_n}-\overline{\xi_0} } =1+ \sumn \frac{\beta_n\overline{\al_n}}{\overline{\lambda_n}-\overline{\xi_0} } \\ & = \overline{1+f_T(\xi_0)} = 0.
    \end{split}
\end{equation*}
Thus, $0$ is an eigenvalue of $S^*$ as we claimed which yields the contradiction.

\smallskip

Accordingly, $f_T(x+iy) +1 \neq 0$ for every $x \in \Omega(T)$ and $y \in \R$ such that $x+iy \in \sigma(T)$. Theorem \ref{teorema} provides a non trivial hyperinvariant subspace for $T$ as we wished.
\end{demode}

A careful reading of the proof of Theorem \ref{main result} shows that the summability condition \eqref{sumabilidad} plays a role in order to ensure the existence of \emph{appropriate points} in the relevant set $\Omega(T)$. As a consequence, a sort of more general statement may be stated:

\begin{coro}\label{corolario}
Let $T=D_{\Lambda} + u\otimes v \in (\RO)$ where $\Lambda = (\lambda_n)_{n\in\N}$ is bounded sequence in $\C$, $u = \sumn \al_ne_n$ and $v = \sumn \beta_n e_n$ are non zero vectors in $H$. Assume that $\sigma(T)$ is connected,  does not lie in any vertical line and both $\sigma_p(T)$ and $\sigma_p(T^*)$ are empty sets. Let $[a,b] = \{\Rpart(\lambda): \lambda \in \sigma(T)\}$ and assume there exist $a<x_1<x_2<b$ such that
\begin{equation}\label{exponente 1}
     \sumn \frac{|\al_n|^2}{|\PR(\lambda_n)-x_i|} + \frac{|\beta_n|^2}{|\PR(\lambda_n)-x_i|} < \infty,
\end{equation}
for $i=1,2.$ Then, $T$ has non trivial closed hyperinvariant subspaces.
\end{coro}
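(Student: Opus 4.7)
The plan is to reduce the corollary to Theorem \ref{teorema} by directly verifying that the two given points $x_1$ and $x_2$ satisfy both defining conditions: (a) membership in the relevant set $\Omega(T)$, and (b) the non-vanishing condition on $1+f_T$ along the corresponding vertical lines inside $\sigma(T)$. The summability condition \eqref{sumabilidad} from Theorem \ref{main result} played a role only in producing such points via Lemmas \ref{lema fx} and \ref{lema logaritmo}; since in the corollary we are handed $x_1$ and $x_2$ explicitly, that machinery can be bypassed entirely.

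The first step is to observe that $x_i \in \Omega(T)$ for $i=1,2$: by hypothesis $a<x_i<b$ and $[a,b]=\Rpart(\sigma(T))$, so $x_i\in \Rpart(\sigma(T))\cap(a,b)$, and the assumed summability \eqref{exponente 1} is exactly the convergence condition in Definition \ref{definition relevant set }.

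The main step, and the crux of the argument, is to show
$$1+f_T(x_i+iy)\neq 0\quad\text{for every } x_i+iy\in\sigma(T),\quad i=1,2,$$
which I would establish by reproducing verbatim the quasisimilarity argument used in the proof of Theorem \ref{main result}. That argument uses only the fact that $x_i\in\Omega(T)$ together with $\sigma_p(T)\cup\sigma_p(T^*)=\emptyset$: if $1+f_T(x_i+iy_0)=0$ for some $x_i+iy_0=:\xi_0\in\sigma(T)$, then after translating by $\xi_0$ and a unimodular rotation to guarantee $\Lambda-\xi_0\cap(-\infty,0]=\emptyset$, membership of $x_i$ in $\Omega(T)$ ensures that $u$ lies in the range of the principal square root $D_{\sqrt{\Lambda-\xi_0}}$. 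Theorem \ref{prop quasisimilares} then gives that $T-\xi_0 I$ is quasisimilar to the operator
$$S=(D_\Lambda-\xi_0 I)+\bigl((D_\Lambda-\xi_0 I)^{-1/2}u\bigr)\otimes\bigl((D_\Lambda^*-\overline{\xi_0}I)^{1/2}v\bigr),$$
and a direct computation of the Borel series shows $1+f_{S^*}(0)=\overline{1+f_T(\xi_0)}=0$, while the summability in $\Omega(T)$ for $\{\beta_n\}$ certifies condition (ii) of Ionascu's theorem, so that $0\in\sigma_p(S^*)$. By quasisimilarity $0\in\sigma_p(T^*-\overline{\xi_0}I)$, contradicting $\sigma_p(T^*)=\emptyset$. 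This is the main obstacle to clear, but it is essentially a direct quotation of the corresponding portion of the proof of Theorem \ref{main result}.

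Finally, with $x_1>x_2$ both in $\Omega(T)$ and both satisfying the non-vanishing condition on $\sigma(T)$, Theorem \ref{teorema} applies and produces a non trivial closed hyperinvariant subspace for $T$, completing the proof.
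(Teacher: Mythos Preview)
Your proposal is correct and coincides with the paper's intended argument: the paper does not give a separate proof but presents the corollary as what remains of the proof of Theorem \ref{main result} once one observes that the summability condition \eqref{sumabilidad} is used solely to manufacture points in $\Omega(T)$ via Lemmas \ref{lema fx} and \ref{lema logaritmo}. One trivial slip: in your final sentence you write ``with $x_1>x_2$'' while the corollary has $x_1<x_2$; this is harmless since Theorem \ref{teorema} only requires two distinct points of $\Omega(T)$, so a relabeling suffices.
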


\section{The obstruction of Theorem \ref{main result}}\label{sec3}

A natural question which arises having Corollary \ref{corolario} at hands is if it is always possible to find $x_1$ and $x_2$ with $a<x_1<x_2<b$ such that the summability condition  \eqref{exponente 1} holds. Observe that if this is the case, due to the discussed reductions,  every rank-one perturbation of a diagonalizable normal operator would have non trivial closed hyperinvariant subspaces.

\smallskip

Next, we provide an example showing that such a set could be empty. In particular, we exhibit a bounded sequence of real numbers $(r_n)_{n\in \N}$ dense in $[-1,1]$ and a non zero vector $u$ in the classical $\ell^2$ space with Fourier coefficients $\{\alpha_n\}_{n\geq 1}$ with respect to the canonical basis $\mathcal{E}=\{e_n\}_{n\geq 1}$ satisfying
$$ \sumn \frac{|\al_n|^2}{|r_n-x|} = \infty$$ for every $x \in (-1,1)$.

\smallskip

Note that, in such a case, if $\Lambda=\{\lambda_n\}_{n\geq 1}$ is any bounded sequence in $\C$ such that $\PR(\Lambda)$ contains $\{r_n\}_{n\geq 1}$ and $D_\Lambda$ is the corresponding diagonal operator with respect to $\mathcal{E}$ acting on $\ell^2$ then any rank-one perturbation $T=D_\Lambda + u\otimes v\in \EL(\ell^2)$, with $v\in \ell^2\setminus \{0\}$,  no longer meets the hypotheses of Corollary \ref{corolario}. Roughly speaking, this example shows the blue obstruction of the discussed approach in order to provide non trivial closed hyperinvariant subspaces.

Likewise, observe that by taking $\Lambda=\{r_n\}_{n\geq 1}$ and $u, v\in \ell^2\setminus \{0\}$ such that their Fourier coefficients $\{\alpha_n\}_{n\geq 1}$ and $\{\beta_n\}_{n\geq 1}$ satisfy
$$
\sum_{n=1}^{\infty} \left | \frac{\al_n\, \overline{\beta_n}}{r_n-x} \right | = \infty
$$
for every $x \in (-1,1)$, the operators $T=D_\Lambda + u\otimes v\in \EL(\ell^2)$ are examples of rank-one perturbations of diagonal operators such that their Borel series $f_T$ are not absolutely convergent at any point in $\sigma(T)$. Nevertheless, a theorem of Radjabalipour and Radjavi \cite{Radjabalipour-Radjavi} yields that such $T$'s are indeed decomposable, and hence, have a rich lattice of invariant subspaces. Accordingly, these examples clearly show an edge in order to apply previous approaches to produce invariant subspaces since they rely on the finiteness of associated Borel series to the operator.

\smallskip

In what follows, we will make use of some ideas by Stampfli \cite{Stampfli}. We thank Prof. Yakubovich for the discussions regarding next example.

\smallskip

\begin{ej}
Let us consider a dyadic partition of the natural numbers $\N$ as follows:  for each non-negative integer $m\in \N_0$, let the set $S_m$ be
$$S_m = \{2^m+k : 0 \leq k \leq 2^m-1 \}.$$
It is clear that $\N= \bigsqcup_{m=0}^\infty S_m.$ Now, define the sequence $\{r_n\}_{n\in\N}$ recursively taking into account the defined partition as follows: for each $n\in S_m$, if $n=2^m+k$ for $m\in \N_0$ and $0\leq k \leq 2^m-1$
$$r_n=r_{2^m+k}:= -1 + \frac{1}{2^m} + \frac{k}{2^{m-1}}.$$

Observe that $\{r_n\}_{n\in\N} $ is a well defined sequence contained in $(-1,1)$ with particular values:
$$
r_1=0;\; r_2=-\frac{1}{2};\; r_3=\frac{1}{2};\; r_4=-\frac{3}{4};\; r_5=-\frac{1}{4};\; r_6=\frac{1}{4};\;  r_7=\frac{3}{4};\; \cdots
$$
Likewise, $\{r_n\}_{n\in\N} $ is dense in $[-1,1].$ Let $\{\gamma_n\}_{n\geq 0} \subset \C$ be a sequence such that
$$ \sumno 2^n|\gamma_n|^2 < \infty \quad \textnormal{and} \quad \sumno n\, 2^n|\gamma_n|^2 = \infty.$$
This can be accomplished by taking, for instance, $\gamma_0=\gamma_1 = 0$ and $\gamma_n = \frac{1}{2^{n/2}\sqrt{n}\log(n)}$ for every $n\geq 2.$

Now, let us consider the vector $u = \sumn \al_ne_n$ defined recursively as follows: for each $n\in S_m$, with $m\in \N_0$, let
$$\al_{n} = \gamma_m.$$
Note that for each $m \in \N_0$, by definition,  $\al_n$ takes the same value for every $n \in S_m$. Observe that $u \in \ell^2$ since
$$\sumn |\al_n|^2 = \sum_{m=0}^\infty \sum_{k=0}^{2^m-1} |\gamma_m|^2 = \sum_{m=0}^\infty 2^m |\gamma_m|^2 < \infty.$$

Now, we show that
$$ \varphi(x):=\sumn \frac{|\al_n|^2}{|r_n-x|} = \infty$$
for every $x\in (-1,1).$

Let $-1<x\leq 0$; the case $0<x <1$ is analogous (indeed,  $\varphi(-x)=\varphi(x)$). There exists a positive integer $N \in \N$ such that for every $n\geq N$
$$-1 + \frac{1}{2^n} < x.$$
For each $n\geq N$, there exists a non-negative integer $k_{0}(n)$ depending on $n$, with $1<k_{0}(n)\leq 2^{n-1}$, such that
$$-1 + \frac{1}{2^n}+ \frac{k_{0}(n)-1}{2^{n-1}} \leq x < -1 + \frac{1}{2^n}+ \frac{k_{0}(n)}{2^{n-1}}$$
or equivalently
$$r_{2^n+k_{0}(n)-1} \leq x < r_{2^n+k_{0}(n)}.$$
Note that for each $k=k_{0}(n),\, k_{0}(n)+1,\, k_{0}(n)+2, \cdots,  k_{0}(n)+2^{n-1}-1$,
$$
\frac{k_{0}(n)-1-k}{2^{n-1}}\leq x- r_{2^n+k}< \frac{k_{0}(n)-k}{2^{n-1}},
$$
so
\begin{equation*}\label{estimacion yakubovich}
|x-r_{2^n+k}| \leq \frac{k-k_{0}(n)+1}{2^{n-1}}.
\end{equation*}
Then
\begin{equation*}
\begin{split}
\sumn \frac{|\al_n|^2}{|r_n-x|} & = \sum_{m=0}^\infty \sum_{k=0}^{2^m-1} \frac{|\gamma_m|^2}{|x-r_{2^m+k}|} \\&
\geq  \sum_{m=N}^\infty \, \sum_{k=k_{0(m)}}^{k_{0}(m)+2^{m-1}-1} \frac{|\gamma_m|^2}{k-k_{0}(m)+1}2^{m-1} \\ &
=  \sum_{m=N}^\infty |\gamma_m|^2 \, 2^{m-1} \, \sum_{k=1}^{2^{m-1}-1} \frac{1}{k} \\&
>  \sum_{m=N}^\infty |\gamma_m|^2\, 2^{m-1} \log(2^{m-1}) \\ &
= \log(2) \sum_{m=M}^\infty |\gamma_m|^2\, 2^{m-1}\cdot (m-1),
\end{split}
\end{equation*}
which diverges because of the choice of the sequence $\{\gamma_m\}_{m\geq 0}$. This concludes the example.
\end{ej}

\section{Finite rank perturbations of diagonalizable normal operators}\label{sec4}

In this section, we consider finite rank perturbations of diagonal operators, namely,
\begin{equation*}\label{rango finito}
     T = D_\Lambda + \sumk u_k\otimes v_k \in \EL(H),
\end{equation*}
where, as before, $\Lambda = \{\lambda_n\}_{n\geq 1}\subset\C$ is a bounded sequence of complex numbers, $D_\Lambda$ is a diagonal operator with respect to an orthonormal basis $\mathcal{E}=\{e_n\}_{n\geq 1}$ with eigenvalues $\Lambda$ and $u_k$ and $v_k$ are non zero vectors in $H$ for $1\leq k \leq N,$ where $N\geq 2$ is fixed. We will denote the Fourier coefficients of the vectors $u_k, v_k$ with respect to $\mathcal{E}$ as
$$ u_k = \sumn \al_n^{(k)}e_n, \quad v_k = \sumn \beta_n^{(k)}e_n \qquad (1\leq k \leq N).$$
Our main goal in this section is generalizing Theorem \ref{main result} in this setting proving the following result:

\begin{teorema}\label{resultado rango finito}
Let $T = D_\Lambda + \sumk u_k\otimes v_k \in \EL(H)\setminus \mathbb{C}\,  Id_{H}$ be any finite rank perturbation of a diagonal normal operator $D_{\Lambda}$ with respect to an orthonormal basis $\mathcal{E}=\{e_n\}_{n\geq 1}$ where $u_k = \sumn \al_n^{(k)}e_n$ and $v_k = \sumn \beta_n^{(k)}e_n$ are non zero vectors in $H$. Then $T$ has non trivial closed hyperinvariant subspaces provided that for each $1\leq k \leq N$
\begin{equation}\label{sumabilidad-1}
 \sum_{n\in \mathcal{N}_{u_k}} \left |\al_n^{(k)}\right |^2 \log \frac{1}{\left |\al_n^{(k)}\right |} + \sum_{n\in \mathcal{N}_{v_k}} \left |\beta_n^{(k)}\right |^2 \log \frac{1}{\left |\beta_n^{(k)}\right |}< \infty, 
\end{equation}
where
\begin{equation}
\mathcal{N}_{u_k}=\{n\in \mathbb{N}:\; \al_n^{(k)}\neq 0\}, \quad \mathcal{N}_{v_k}=\{n\in \mathbb{N}:\; \beta_n^{(k)}\neq 0 \}.\label{index set}
\end{equation}
\end{teorema}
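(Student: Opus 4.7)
The plan is to mirror the proof of Theorem \ref{main result}, systematically replacing the scalar Borel series $f_T(z)$ by the matrix-valued one $F_T(z)=(f_{kj}(z))_{1\leq k,j\leq N}$ where
\[
f_{kj}(z)=\sumn \frac{\al_n^{(k)}\,\overline{\beta_n^{(j)}}}{\lambda_n-z}.
\]
Writing $(T-zI)x=0$ explicitly as in Ionascu's argument, an eigenvector of $T$ at $z\notin\overline{\Lambda}$ produces via $(D_{\Lambda}-zI)x=-\sum_{k} \pe{x,v_k} u_k$ a non-zero element of the kernel of $I_N+F_T(z)^{T}$, so the scalar condition $1+f_T(z)=0$ of Ionascu's theorem is replaced by the scalar condition $\det(I_N+F_T(z))=0$. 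As in Theorem \ref{main result} I can reduce to the case $\sigma_p(T)\cup \sigma_p(T^*)=\emptyset$, $\sigma(T)$ connected and not lying on a vertical line, since each failure of these conditions yields a hyperinvariant subspace directly via eigenspaces, Riesz spectral projections, and \cite[Corollary 6.14]{RR}, respectively.

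Next I would introduce the relevant set for $T$,
\[
\Omega(T)=\Bigl\{x\in \Rpart(\sigma(T))\cap(a,b):\; \sum_{k=1}^{N}\sumn \frac{|\al_n^{(k)}|^2+|\beta_n^{(k)}|^2}{|\Rpart(\lambda_n)-x|}<\infty\Bigr\},
\]
and apply Lemma \ref{lema fx} to each of the $2N$ coefficient sequences restricted to the index sets in \eqref{index set}. The summability assumption \eqref{sumabilidad-1} then gives that $\Omega(T)$ has full Lebesgue measure in $(a,b)$, and in particular contains infinitely many points. For every $x\in \Omega(T)$, the Cauchy--Schwarz estimate used in the proof of Theorem \ref{proposicion} shows that the entries $f_{kj}(x+iy)$, and hence the map $y\mapsto \det(I_N+F_T(x+iy))$, extend continuously to the vertical segment $\ell_x$.

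With these ingredients in hand I would prove a matrix-valued analog of Theorem \ref{teorema}: if $x_1>x_2$ are two points of $\Omega(T)$ satisfying $\det(I_N+F_T(x_i+iy))\neq 0$ for every $x_i+iy\in \sigma(T)$ ($i=1,2$), then $H_T(F_{x_1}^{+})$ and $H_{T^*}(F_{x_2}^{-})$ are both non-zero, and Corollary \ref{adjuntos} delivers the desired non-trivial closed hyperinvariant subspace. The non-vanishing of the spectral subspaces is a line-by-line adaptation of Theorem \ref{proposicion}: condition (iii) of \cite[Theorem 3.2]{GG} is verified by combining the estimate \eqref{acotacion integral} with Lemma \ref{lema logaritmo} applied separately to each $u_k$ and each $v_k$ and summing the finitely many resulting convergent series, while condition (ii) now demands continuity of the matrix inverse $(I_N+F_T(\zeta))^{-1}$ on the contour $\gamma_{x_i}^{+}$, which follows from the continuity and non-vanishing of $\det(I_N+F_T(\cdot))$ on $\gamma_{x_i}^{+}$ together with Cramer's rule.

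The main obstacle is then to verify the non-vanishing hypothesis $\det(I_N+F_T(x+iy))\neq 0$ for every $x\in\Omega(T)$ and every $x+iy\in\sigma(T)$. I would argue by contradiction exactly as in Theorem \ref{main result}: assuming $\det(I_N+F_T(\xi_0))=0$ for some $\xi_0=x_0+iy_0\in\sigma(T)$ with $x_0\in\Omega(T)$, translate by $\xi_0$ and rotate by a unimodular scalar so that no eigenvalue of $D_{\Lambda-\xi_0}$ hits $(-\infty,0]$. The membership $x_0\in\Omega(T)$ ensures each $u_k$ lies in the range of $D_{\sqrt{\Lambda-\xi_0}}$, so a finite rank version of Theorem \ref{prop quasisimilares} (whose proof is the rank-one intertwining by $D_{\sqrt{\Lambda-\xi_0}}$ applied summand by summand) yields that $T-\xi_0 I$ is quasisimilar to
\[
S=D_{\Lambda-\xi_0}+\sum_{k=1}^{N}\bigl((D_\Lambda-\xi_0 I)^{-1/2}u_k\bigr)\otimes\bigl((D_\Lambda^{*}-\overline{\xi_0}I)^{1/2}v_k\bigr).
\]
A direct computation, identical to the rank-one one, shows that the square-root factors in the Fourier coefficients of the new vectors cancel pairwise, giving $F_{S^{*}}(0)=F_T(\xi_0)^{*}$; hence
\[
\det(I_N+F_{S^{*}}(0))=\overline{\det(I_N+F_T(\xi_0))}=0.
\]
The matrix-valued version of Ionascu's characterization then exhibits $0$ as an eigenvalue of $S^{*}$, and by quasisimilarity $\overline{\xi_0}\in\sigma_p(T^{*})$, contradicting the reduction step. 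The principal technical hurdles are therefore (a) the careful promotion of the rank-one quasisimilarity theorem to the finite rank setting, and (b) the matrix determinant identity $F_{S^*}(0)=F_T(\xi_0)^*$; both are essentially bookkeeping exercises but must be carried out uniformly in $k$ so as to preserve quasisimilarity for the full finite rank operator.
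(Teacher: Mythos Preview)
Your proposal is correct and follows essentially the same route as the paper: the same reductions, the same relevant set $\Omega_N(T)$, the matrix $I_N+F_T(z)$ (the paper's $M_T(z)$), the finite-rank quasisimilarity lemma, and the contradiction via $F_{S^*}(0)=F_T(\xi_0)^*$ all appear verbatim in the paper's proof of Theorem~\ref{resultado rango finito}. The only cosmetic difference is that the paper phrases the contradiction through Lemma~\ref{autovalores rango finito} (\cite[Lemma 3.1]{FX12}) in the direction ``$\sigma_p(S^*)=\emptyset\Rightarrow \det M_{S^*}(0)\neq 0$'' rather than its contrapositive, and it explicitly records the reduction to the class $(\RO)_N$ via \cite[Proposition 6.1]{GG} and the verification that $(D_\Lambda^*-\overline{\xi_0}I)^{1/2}v_k\in\mathrm{ran}(D_\Lambda^*-\overline{\xi_0}I)$, which you use implicitly but should state.
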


The proof of Theorem \ref{resultado rango finito} consists of an appropriate generalization of the approach carried over for rank-one perturbations of diagonal operators. Nevertheless, there are some technical issues which are not straightforward and  we will mainly address them instead of repeating  again some of the ideas.\\

We start by generalizing the class $(\RO)$ in the setting of finite rank perturbations of diagonal operators:
\\

\begin{defi}
With the notation as introduced above, the operator $T = D_\Lambda + \sumk u_k\otimes v_k \in \EL(H)\setminus \mathbb{C}\,  Id_{H}$ belongs to the class $(\RO)_N$ if:
\begin{enumerate}
    \item [(i)] For each $n \in \N$ there exist $1\leq k_1,k_2 \leq N$ such that $\al_n^{(k_1)}\beta_n^{(k_2)} \neq 0.$
    \item [(ii)] Each $\lambda \in \Lambda$ has multiplicity less or equal than $N$.
    \item [(iii)] The derive set  $\Lambda'$ is not reduced to a singleton.
\end{enumerate}
\end{defi}

As it was shown in \cite[Proposition 6.1]{GG}, if $T=D_\Lambda + \sumk u_k\otimes v_k \in \EL(H)\setminus \mathbb{C}\,  Id_{H}$  with $u_k, \,v_k\in H$ for $1\leq k\leq N$ does not belong to the corresponding class $(\RO)_N$, then it has non trivial closed hyperinvariant subspaces. Accordingly, from the standpoint of view of studying invariant subspaces, we restrict ourselves to this particular subclass of operators.\\

We begin with an appropriate generalization of Borel series associated to finite rank perturbations of diagonal operators.

\medskip

Let $T=D_\Lambda + \sumk u_k\otimes v_k\in \EL(H)\setminus \mathbb{C}\,  Id_{H}$ be an operator in $(\RO)_N$, where $\Lambda=\{\lambda_n\}_{n\geq 1}$ is a bounded sequence in $\mathbb{C}$.  For each $i,j \in \{1,\cdots, N\}$, let us define
$$f_T^{(i,j)}(z) = \sumn \frac{\al_n^{(i)}\overline{\beta_n^{(j)}}}{\lambda_n-z}$$
for those complex numbers $z$ such that the series converges. Clearly, each $f_T^{(i,j)}$ is an analytic function in $\C\setminus \overline{\Lambda}$. Let $M_T$ be the $N\times N$ matrix function
$$ M_T(z)= \left( \begin{matrix}
f_T^{(1,1)}(z)+1 & f_T^{(1,2)}(z) & \cdots & f_T^{(1,N)}(z) \\
f_T^{(2,1)}(z) & f_T^{(2,2)}(z)+1 & \cdots & f_T^{(2,N)}(z) \\
 \vdots & \vdots & \vdots & \vdots\\
 f_T^{(N,1)}(z) & f_T^{(N,2)}(z) &\cdots& f_T^{(N,N)}(z)+1 \end{matrix}\right)$$
for those $z\in \mathbb{C}$ such that $f_T^{(i,j)}(z)$ is defined for every $i,j \in \{1,\cdots, N\}$. This matrix function plays a significant role regarding the existence
of eigenvalues of the operator $T$:\\

 \begin{lema}\rm{(\cite[Lemma 3.1]{FX12})} Let $T = D_\Lambda + \sumk u_k\otimes v_k \in \EL(H)\setminus \mathbb{C}\,  Id_{H}$ be a finite rank perturbation of a diagonal normal operator $D_{\Lambda}$ with respect to an orthonormal basis $\mathcal{E}=\{e_n\}_{n\geq 1}$ where $u_k = \sumn \al_n^{(k)}e_n$ and $v_k = \sumn \beta_n^{(k)}e_n$ are non zero vectors in $H$ for $1\leq k\leq N$. Assume further  that $T\in (\RO)_N$ and $z \in \C$ such that
 \begin{enumerate} \label{autovalores rango finito}
     \item [(i)] $z \notin \Lambda.$
     \item [(ii)] For every $1\leq k \leq N$,
     $$\sumn \frac{\left |\al_n^{(k)}\right |^2}{|\lambda_n-z|^2} < \infty.$$
     \item [(iii)] $\ker(T-zI) = \{0\}.$
 \end{enumerate}
 Then $M_T(z)$ is invertible.
 \end{lema}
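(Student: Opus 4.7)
The plan is to argue by contraposition: assume $M_T(z)$ is singular and construct a non-zero eigenvector of $T$ at $z$, contradicting (iii). Since $M_T(z)$ is an $N\times N$ singular matrix, its left nullspace is non-trivial, so we may pick $(c_1,\dots,c_N)\in \C^N\setminus\{0\}$ satisfying
$$c_j + \sumk c_k f_T^{(k,j)}(z) = 0, \qquad 1\leq j\leq N.$$

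By (i), $z\notin \Lambda$, so $D_{\Lambda}-zI$ is formally invertible coordinate-wise, and hypothesis (ii) tells us that
$$w_k := (D_\Lambda - zI)^{-1}u_k = \sumn \frac{\al_n^{(k)}}{\lambda_n-z}\,e_n$$
is a bona fide vector in $H$ for each $1\leq k\leq N$. Set $x := -\sumk c_k w_k \in H$. A direct computation in the orthonormal basis $\mathcal{E}$ yields $\langle w_k, v_j\rangle = f_T^{(k,j)}(z)$, and hence
$$\langle x, v_j\rangle = -\sumk c_k f_T^{(k,j)}(z) = c_j,$$
by the left-nullspace relation. Combining this with the elementary identity $D_\Lambda w_k = u_k + z w_k$ (which follows from $D_\Lambda = (D_\Lambda-zI)+zI$) one computes
$$Tx = D_\Lambda x + \sumk \langle x,v_k\rangle u_k = -\sumk c_k u_k - z\sumk c_k w_k + \sumk c_k u_k = zx.$$

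The only step where the argument could genuinely fail is ruling out the degenerate possibility $x=0$: a priori, the linear combination $\sumk c_k w_k$ might collapse even though $(c_1,\dots,c_N)\neq 0$. The identity $\langle x, v_j\rangle = c_j$ resolves this for free, since $x=0$ would force every $c_j$ to vanish, contradicting our choice of left nullvector. Hence $x$ is a genuine eigenvector of $T$ at $z$, violating $\ker(T-zI)=\{0\}$, and the contraposition is complete. Notice that no further structural input from the class $(\RO)_N$ is needed; the statement reduces to a clean finite-dimensional linear-algebra computation, the one subtle point being that it is precisely the \emph{left} nullvector of $M_T(z)$ that simultaneously supplies the coefficients $c_j$ and the Fourier coefficients $\langle x,v_j\rangle$ of the constructed eigenvector.
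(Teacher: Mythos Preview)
Your argument is correct. The paper does not supply its own proof of this lemma; it simply quotes \cite[Lemma~3.1]{FX12} and remarks that the additional hypothesis $\sum_n |\beta_n^{(k)}|^2/|\lambda_n-z|^2<\infty$ present there is superfluous. Your proof is exactly the standard contrapositive construction one would expect in Fang--Xia, and it makes transparent \emph{why} that extra hypothesis is unneeded: the only place the $\beta$'s enter is through the inner products $\langle w_k,v_j\rangle$, and these are finite by Cauchy--Schwarz as soon as $w_k\in H$ (guaranteed by (ii)) and $v_j\in H$. The one delicate point---ensuring $x\neq 0$---you handle cleanly via the identity $\langle x,v_j\rangle=c_j$, which is indeed the reason for taking a \emph{left} nullvector of $M_T(z)$ rather than a right one.
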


We stress here that \cite[Lemma 3.1]{FX12} also asks that the series
$$\sumn \frac{\left |\beta_n^{(k)}\right |^2}{|\lambda_n-z|^2} < \infty,$$
converge for every $1\leq k \leq N$, but a careful reading of the proof shows that this condition is no longer necessary to yield the conclusion. \\

Next result, which we borrow from \cite{GG}, allows us to exhibit non zero spectral subspaces of operators $T \in (\RO)_N.$

\begin{teorema}{\rm (\cite[Theorem 6.8]{GG})} Let $T=D_\Lambda + \sumk u_k\otimes v_k \in (\RO)_N$ with $u=\sumn \al_n^{(k)}e_n$ and $v_k = \sumn \beta_n^{(k)}e_n$ non zero vectors in $H$ for each $1\leq k\leq N.$ Assume $\sigma_p(T)\cup\sigma_p(T^*) = \emptyset$ and $\sigma(T)$ is connected. Suppose, further, that there exists a closed, simple, piecewise differentiable curve $\gamma$ in $\C$ not intersecting $\Lambda$ such that
\begin{enumerate} \label{existencia rango finito}
    \item [(a)] $\sigma(T)\cap\inte(\gamma) \neq \emptyset$ and $\inte(\gamma)\cap \rho(T) \neq \emptyset.$
    \item [(b)] The maps $$ \xi \in \gamma \mapsto f_T^{(i,j)}(\xi),  \quad (1\leq i,j\leq N)$$
    and
    $$\xi \in \gamma \mapsto \frac{1}{\det(M_T(\xi))}$$
    are well defined and continuous on $\gamma.$
    \item[(c)] The series
    $$\sumn \left( \int_\gamma \frac{|d\xi|}{|\lambda_n-\xi|}\right)^2|\al_n^{(k)}|^2 < \infty,$$
    converge for every $1\leq k \leq N.$
\end{enumerate}
    Then, $H_T(\overline{\inte(\gamma)})$ is a non zero spectral subspace.
\end{teorema}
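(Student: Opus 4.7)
The plan is to construct explicitly a non-zero vector $x$ in $H_T(\overline{\inte(\gamma)})$ by integrating an extended formal resolvent along $\gamma$, mirroring the rank-one construction \cite[Theorem 3.2]{GG} with the scalar inversion of $1+f_T$ replaced by the matrix inversion of $M_T$.

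First, I would set up the \emph{formal resolvent}. By the Sherman--Morrison--Woodbury formula, for $\xi\in\rho(T)\setminus\Lambda$ one has
$$
(T-\xi I)^{-1}=(D_\Lambda-\xi I)^{-1}-(D_\Lambda-\xi I)^{-1}\,U\, M_T(\xi)^{-T}\, V^{*}(D_\Lambda-\xi I)^{-1},
$$
with $U=[u_1,\ldots,u_N]$, $V=[v_1,\ldots,v_N]$. Applied to $e_{n_0}$ this yields a vector
$$
F_{n_0}(\xi)=\frac{e_{n_0}}{\lambda_{n_0}-\xi}-\frac{1}{\lambda_{n_0}-\xi}\sum_{k=1}^{N}c_k(\xi,n_0)\,(D_\Lambda-\xi I)^{-1}u_k,
$$
where $c_k(\xi,n_0)$ is a linear combination of the $\overline{\beta_{n_0}^{(j)}}$ with coefficients the entries of $M_T(\xi)^{-1}$. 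Condition (b) prolongs $F_{n_0}$ continuously to the whole of $\gamma$, and a direct algebraic check (parallel to the rank-one computation) gives the identity $(T-\xi I)F_{n_0}(\xi)=e_{n_0}$ for every $\xi\in\gamma$.

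Next, I would pick $n_0$ with $\lambda_{n_0}\in\inte(\gamma)$: such an index exists because $\sigma_p(T)=\emptyset$ combined with the finite-rank analogue of Ionascu's characterization forces $\sigma(T)\subseteq\overline{\Lambda}$, so $\sigma(T)\cap\inte(\gamma)\neq\emptyset$ deposits infinitely many eigenvalues $\lambda_n$ in the open set $\inte(\gamma)$. Then I define
$$
x:=\frac{1}{2\pi i}\oint_\gamma F_{n_0}(\xi)\,d\xi,\qquad f_x(z):=\frac{1}{2\pi i}\oint_\gamma \frac{F_{n_0}(\xi)}{\xi-z}\,d\xi\quad(z\notin\gamma).
$$
Well-definedness of $x\in H$ follows from a Minkowski-type bound
$$
\sum_{m\geq 1}|\langle x,e_m\rangle|^2\;\lesssim\;\sum_{k=1}^{N}\sum_{m\geq 1}|\alpha_m^{(k)}|^2\Bigl(\int_\gamma\tfrac{|d\xi|}{|\lambda_m-\xi|}\Bigr)^2,
$$
finite by condition (c), together with uniform boundedness on $\gamma$ of the scalars $c_k(\xi,n_0)/(\lambda_{n_0}-\xi)$ granted by (b). The function $f_x$ is holomorphic off $\gamma$, and using $(T-zI)F_{n_0}(\xi)=e_{n_0}+(\xi-z)F_{n_0}(\xi)$ one obtains
$$
(T-zI)f_x(z)=\frac{1}{2\pi i}\oint_\gamma\frac{e_{n_0}}{\xi-z}\,d\xi+x=x\quad\text{for}\quad z\notin\overline{\inte(\gamma)},
$$
the first integral vanishing by Cauchy's theorem; this gives $\sigma_T(x)\subseteq\overline{\inte(\gamma)}$ and thus $x\in H_T(\overline{\inte(\gamma)})$.

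The delicate point, and the main obstacle, is to prove $x\neq 0$. Suppose to the contrary $x=0$. Then $(T-zI)f_x(z)=0$ on the exterior; since $\sigma_p(T)=\emptyset$, $T-zI$ is injective on $\rho(T)$, which forces $f_x\equiv 0$ on the unbounded component of $\C\setminus\overline{\inte(\gamma)}$. Meanwhile, condition (a) provides an interior resolvent point $z_0\in\inte(\gamma)\cap\rho(T)$; the same contour identity, with $z_0$ inside $\gamma$, adds an extra $e_{n_0}$ from the Cauchy kernel, yielding $(T-z_0I)f_x(z_0)=e_{n_0}$ and so $f_x(z_0)=(T-z_0I)^{-1}e_{n_0}\neq 0$. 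Hence $f_x$ would furnish a single-valued holomorphic continuation of the resolvent orbit $z\mapsto (T-zI)^{-1}e_{n_0}$ from the hole $\inte(\gamma)\cap\rho(T)$ across the piece $\sigma(T)\cap\inte(\gamma)$, implying in particular $\lambda_{n_0}\notin\sigma_T(e_{n_0})$. A residue analysis of the formal expression for $F_{n_0}$ at $\xi=\lambda_{n_0}$ shows, however, that the singular part at $\lambda_{n_0}$ cannot be removed without a non-generic cancellation of $\sum_k c_k(\lambda_{n_0},n_0)\alpha_{n_0}^{(k)}$, which one rules out using $\sigma_p(T^*)=\emptyset$ (so that $T^*$ has SVEP and the local-spectral duality $H_T(F_1)\subseteq H_{T^*}(F_2^*)^\perp$ applies) together with the connectedness of $\sigma(T)$. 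The crux of the argument is precisely this: $F_{n_0}$ is not globally holomorphic inside $\gamma$, so naive residue calculus fails, and the non-vanishing of $x$ must be extracted from the rigidity imposed jointly by SVEP of $T$ and $T^*$, connectedness of $\sigma(T)$, and condition (a) — without the interior resolvent point no comparison between interior and exterior Cauchy integrals is available, and the whole scheme collapses.
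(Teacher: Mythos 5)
The paper does not reprove this statement (it is imported verbatim from \cite[Theorem 6.8]{GG}), so I can only judge your proposal on its own terms. Your architecture --- a contour integral of a Sherman--Morrison type formal resolvent, the Cauchy transform $f_x$, and the identity $(T-zI)f_x(z)=x$ off $\overline{\inte(\gamma)}$ --- is the right skeleton for results of this kind. But there are two genuine gaps, and the first is fatal as written. The non-vanishing of $x$ is the entire content of the theorem, and you do not prove it. After correctly reducing to ``if $x=0$ then $z\mapsto (T-zI)^{-1}e_{n_0}$ continues holomorphically from $\inte(\gamma)\cap\rho(T)$ to all of $\inte(\gamma)$, hence $\inte(\gamma)\subseteq\rho_T(e_{n_0})$,'' you assert that ``a residue analysis'' yields a contradiction, but none is derived. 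There is no a priori reason why $\lambda_{n_0}$ must belong to $\sigma_T(e_{n_0})$ for the \emph{perturbed} operator (for $D_\Lambda$ itself one has $\sigma_{D_\Lambda}(e_{n_0})=\{\lambda_{n_0}\}$, but the perturbation can move local spectra, and establishing such a statement is essentially equivalent to what is being proved). Moreover, the object you propose to analyze does not exist: $f_x$ is a Cauchy transform over $\gamma$ and is automatically holomorphic throughout $\inte(\gamma)$, so it has no ``singular part at $\lambda_{n_0}$''; the formal pole of $F_{n_0}$ at $\xi=\lambda_{n_0}$ never enters the integral, since $\gamma$ avoids $\Lambda$. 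The appeal to a ``non-generic cancellation of $\sum_k c_k(\lambda_{n_0},n_0)\al_{n_0}^{(k)}$,'' to SVEP of $T^*$, and to connectedness of $\sigma(T)$ is a description of where the difficulty lives, not an argument resolving it.

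The second gap is technical but real. For $\xi\in\gamma\cap\sigma(T)$ the vector $F_{n_0}(\xi)$ need not belong to $H$: hypothesis (c) controls only $\int_\gamma |d\xi|/|\lambda_n-\xi|$, which grows merely logarithmically as $\lambda_n$ approaches $\gamma$, and gives no control of $\sum_n |\al_n^{(k)}|^2/|\lambda_n-\xi|^2$; hypothesis (b) guarantees continuity on $\gamma$ of the scalar data $f_T^{(i,j)}$ and $1/\det(M_T)$, not of the $H$-valued map $\xi\mapsto (D_\Lambda-\xi I)^{-1}u_k$. Hence ``condition (b) prolongs $F_{n_0}$ continuously to the whole of $\gamma$'' and the pointwise identity $(T-\xi I)F_{n_0}(\xi)=e_{n_0}$ on $\gamma$ are unjustified precisely where $\gamma$ meets $\sigma(T)$. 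This part is repairable: define $x$ and $f_x(z)$ coefficient-wise (each Fourier coefficient of the integrand is integrable over $\gamma$, and (c) together with Minkowski's integral inequality gives square-summability of the resulting coefficients), then verify $(T-zI)f_x(z)=x$ by interchanging sum and integral. But the repair has to be made explicitly, and even after it is made the non-vanishing step still requires an actual proof rather than the placeholder you have supplied.
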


Next definition is the generalization of Definition \ref{definition relevant set } for operators $T\in (\RO)_N$:

\begin{defi}
Let $T = D_\Lambda + \sumk u_k\otimes v_k \in \EL(H)\setminus \mathbb{C}\,  Id_{H}$ be a finite rank perturbation of a diagonal normal operator $D_{\Lambda}$ with respect to an orthonormal basis $\mathcal{E}=\{e_n\}_{n\geq 1}$, where $u_k = \sumn \al_n^{(k)}e_n$ and $v_k = \sumn \beta_n^{(k)}e_n$ are non zero vectors in $H$ for $1\leq k\leq N$. Let
$$a = \min\limits_{\lambda \in \sigma(T)} \PR(\lambda), \quad \text{ and } \quad b = \max\limits_{\lambda \in \sigma(T)} \PR(\lambda).$$
The \emph{relevant set} of $T$ consists of
\begin{equation*}
\Omega_N(T)=  \Big \{ x \in \PR(\sigma(T))\cap (a,b) : \sumn \frac{\left |\al_n^{(k)}\right |^2}{|\PR(\lambda_n)-x|} + \frac{\left |\beta_n^{(k)}\right |^2}{|\PR(\lambda_n)-x|} < \infty, \quad \textnormal{for every} \ 1\leq k \leq N \Big \}.
\end{equation*}
\end{defi}

\medskip

Note that if $u_k$ and $v_k$ have non zero Fourier coefficients for every $1\leq k\leq N$, the index sets in \eqref{index set} coincides with $\mathbb{N}$. Hence, if \eqref{sumabilidad-1} holds for every $1\leq k\leq N$, Lemma \ref{lema fx} yields that almost every point in $(a,b)$ belongs to $\Omega_N(T).$

\medskip

On the other hand, if $u_k$ (or $v_k$) has a Fourier coefficient which is zero for some $1\leq k\leq N$, say $\alpha_{n_0}^{(k)}$ for instance, though the index set $\mathcal{N}_{u_k}\subsetneq \mathbb{N}$, the conclusion of Lemma \ref{lema fx} clearly holds if \eqref{serie logaritmo} is replaced by
$$\sum_{n\in \mathcal{N}_{u_k}}  |\al_n^{(k)}|^2\, \log \frac{1}{\left |\al_n^{(k)}\right |} < \infty.$$
Accordingly, in such a case, under the hypotheses \eqref{sumabilidad-1}, it also follows that almost every point in $(a,b)$ belongs to $\Omega_N(T).$

Thus, in both cases, if $T \in (\RO)_N$ and $\sigma(T)$ is a connected set not contained in any vertical line in the complex plane then $\Omega_N(T)$ contains infinitely many points.

\medskip

Next result generalizes Theorem \ref{proposicion} to this context:

\begin{teorema}
Let $T = D_\Lambda + \sumk u_k\otimes v_k \in \EL(H)\setminus \mathbb{C}\,  Id_{H}$ be a finite rank perturbation of a diagonal normal operator $D_{\Lambda}$ with respect to an orthonormal basis $\mathcal{E}=\{e_n\}_{n\geq 1}$, where $u_k = \sumn \al_n^{(k)}e_n$ and $v_k = \sumn \beta_n^{(k)}e_n$ are non zero vectors in $H$ for $1\leq k\leq N$. Assume $T \in (\RO)_N$, $\sigma(T)$  is a connected set not contained in any vertical line in $\mathbb{C}$ and $\sigma_p(T)\cup \sigma_p(T^*)=\emptyset$. For $\x \in \Omega_N(T)$ let $F_\x^+$ and $F_\x^-$ denote the sets
$$F_\x^+ = \{ \lambda \in \sigma(T): \PR(\lambda) \geq \x\},$$
$$F_\x^- = \{ \lambda \in \sigma(T) : \PR(\lambda) \leq \x\}.$$
If $\x \in \Omega_N(T)$ satisfies
\begin{equation}\label{ceros rango finito}
    \det(M_T(\x+iy))\neq 0 \quad \textnormal{for every} \ \x+iy \in \sigma(T),
\end{equation}
then $H_T(F_\x^+)$ and $H_T(F_\x^-)$ are non zero spectral subspaces.
\end{teorema}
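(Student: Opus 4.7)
The plan is to mimic the proof of Theorem \ref{proposicion}, replacing the appeal to Theorem 3.2 of \cite{GG} by its finite rank analogue, Theorem 6.8 of \cite{GG}. After the harmless normalization $\overline{\Lambda}\subset \D_+$, I would pick $\x\in \Omega_N(T)$ satisfying \eqref{ceros rango finito} and form the piecewise differentiable curve $\gamma_\x^+:=\ell_\x\cup A_\x^+$, where $\ell_\x$ is the vertical segment through $\x$ with endpoints on $\T$ and $A_\x^+=\{e^{i\theta}\in\T:\PR(e^{i\theta})\geq \x\}$. Because $\sigma(T)$ is connected and not contained in any vertical line, $a<\x<b$, the curve $\gamma_\x^+$ avoids $\Lambda$, and $F_\x^+=\overline{\inte(\gamma_\x^+)}\cap \sigma(T)$ is a proper closed subset of $\sigma(T)$; the argument for $H_T(F_\x^-)$ is entirely symmetric. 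Condition (a) of Theorem 6.8 of \cite{GG} splits into $\sigma(T)\cap\inte(\gamma_\x^+)\neq\emptyset$, which follows at once from connectedness of $\sigma(T)$ and $a<\x<b$, and $\rho(T)\cap\inte(\gamma_\x^+)\neq\emptyset$, which I would obtain from the fact that the spectrum of $T$ outside $\overline{\Lambda}$ is contained in the zero set of the meromorphic function $\xi\mapsto \det M_T(\xi)$ and hence cannot fill a two dimensional region.

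The heart of the proof is condition (b). For each $1\leq i,j\leq N$, on $A_\x^+$ the series defining $f_T^{(i,j)}$ is dominated by a constant multiple of $\sum_n|\alpha_n^{(i)}\beta_n^{(j)}|$, since $\textnormal{dist}(A_\x^+,\Lambda)>0$, while on $\ell_\x$ Cauchy--Schwarz combined with the definition of $\Omega_N(T)$ yields
\begin{equation*}
|f_T^{(i,j)}(\x+iy)|\leq \Bigl(\sum_n\frac{|\alpha_n^{(i)}|^2}{|\PR(\lambda_n)-\x|}\Bigr)^{1/2}\Bigl(\sum_n\frac{|\beta_n^{(j)}|^2}{|\PR(\lambda_n)-\x|}\Bigr)^{1/2}<\infty
\end{equation*}
uniformly in $y$, so a Weierstrass M-test gives continuity of each $f_T^{(i,j)}$, and hence of $\det M_T$, on $\gamma_\x^+$. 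The main obstacle will be showing that $\det M_T$ does not vanish anywhere on $\gamma_\x^+$. On $\sigma(T)\cap\ell_\x$ this is exactly hypothesis \eqref{ceros rango finito}. On $\rho(T)\cap\gamma_\x^+$ I would invoke Lemma 3.1 of \cite{FX12}: such a $z$ satisfies $\ker(T-zI)=\{0\}$ automatically, $z\notin\Lambda$ by construction of $\gamma_\x^+$, and $z\notin\Lambda'\subset\sigma(T)$ gives $\inf_n|\lambda_n-z|>0$, whence
\begin{equation*}
\sum_n\frac{|\alpha_n^{(k)}|^2}{|\lambda_n-z|^2}\leq \frac{\|u_k\|^2}{(\inf_n|\lambda_n-z|)^2}<\infty\qquad (1\leq k\leq N),
\end{equation*}
and the lemma yields invertibility of $M_T(z)$. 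Any finitely many points of $\sigma(T)\cap A_\x^+$ would be isolated zeros of an analytic function and can be absorbed by a small deformation of the arc.

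Finally, condition (c) reduces to the computation already carried out in Theorem \ref{proposicion}. The pointwise bound
\begin{equation*}
\int_{\ell_\x}\frac{|d\xi|}{|\lambda_n-\xi|}\leq C+\max\{A,4|\log|\PR(\lambda_n)-\x||\}
\end{equation*}
is unchanged, the arc contribution is uniformly bounded in $n$, and Lemma \ref{lema logaritmo} provides the convergence of $\sum_n(\log|\PR(\lambda_n)-\x|)^2|\alpha_n^{(k)}|^2$ for almost every $\x$, which I may secure by refining the choice inside $\Omega_N(T)$. Summation then yields $\sum_n\bigl(\int_{\gamma_\x^+}|d\xi|/|\lambda_n-\xi|\bigr)^2|\alpha_n^{(k)}|^2<\infty$ for each $1\leq k\leq N$, so Theorem 6.8 of \cite{GG} applies and gives $H_T(\overline{\inte(\gamma_\x^+)})\neq\{0\}$, hence $H_T(F_\x^+)\neq\{0\}$.
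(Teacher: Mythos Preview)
Your approach mirrors the paper's proof, which likewise reduces everything to checking conditions (a)--(c) of \cite[Theorem~6.8]{GG} for the curve $\gamma_\x^+=\ell_\x\cup A_\x^+$, handling (b) via Cauchy--Schwarz on $\ell_\x$ together with hypothesis \eqref{ceros rango finito} on $\gamma_\x^+\cap\sigma(T)$ and Lemma~\ref{autovalores rango finito} on $\gamma_\x^+\cap\rho(T)$.

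There is one slip in your treatment of (c): the statement fixes $\x\in\Omega_N(T)$, so you are not free to ``refine the choice'' to secure Lemma~\ref{lema logaritmo}. Fortunately no refinement is needed. For $0<t\leq 1$ one has $(\log t)^2\leq C/t$, so the defining inequality of $\Omega_N(T)$ already forces
\[
\sumn (\log|\PR(\lambda_n)-\x|)^2\,|\alpha_n^{(k)}|^2<\infty\qquad(1\leq k\leq N)
\]
for \emph{every} $\x\in\Omega_N(T)$; this is precisely the observation recorded after Definition~\ref{definition relevant set } and used verbatim in the proof of Theorem~\ref{proposicion}.

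Two further simplifications are available. Once $\overline{\Lambda}\subset\D_+$, the hypothesis $\sigma_p(T)=\emptyset$ together with preservation of the essential spectrum under compact perturbation gives $\sigma(T)=\Lambda'\subset\D_+$. Hence $\sigma(T)\cap A_\x^+=\emptyset$ (so no arc deformation is needed) and $\inte(\gamma_\x^+)\cap\rho(T)\neq\emptyset$ is immediate (any point of $\inte(\gamma_\x^+)$ with nonpositive imaginary part works), so your meromorphic-zero-set argument, while correct, is unnecessary.
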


The proof follows the lines of Theorem \ref{proposicion} and hence, we focus on those details that differs from it.

\begin{proof}
Without loss of generality, we assume that $\overline{\Lambda} \subset \D_+$ and prove that $H_T(F^+)$ is non zero (the argument for $H_T(F^-)$ is analogous).

Taking $\x \in \Omega_N(T)$ satisfying \eqref{ceros rango finito} and arguing as is Theorem \eqref{proposicion}, it suffices to show that $H_T(\overline{\inte(\gamma_\x^+)})$ is non zero by checking that conditions (a),(b) and (c) of Theorem \ref{existencia rango finito} are fulfilled.
Conditions (a) and (c) follows  as in Theorem \ref{proposicion}, so we are left to prove that condition (b) also holds.

First, let us show that the maps
$$ \xi \in \gamma_\x^+ \mapsto f_T^{(i,j)}(\xi) = \sumn \frac{\al_n^{(i)}\overline{\beta_n^{(j)}}}{\lambda_n-\xi} $$
\noindent are well defined and are continuous for $1\leq i,j\leq N.$ Since these functions are holomorphic on $\C\setminus\overline{\Lambda},$ it suffices to prove that they are well defined and are continuous in $\gamma_\x^+\cap \sigma(T)$.

Note that for every $\xi = \x+iy \in \sigma(T)$
$$|f_T^{(i,j)}(\xi)| \leq \sumn \left | \frac{\al_n^{(i)}\overline{\beta_n^{(j)}}}{\lambda_n-\xi} \right  |
\leq
\sumn \left | \frac{\al_n^{(i)}\overline{\beta_n^{(j)}}}{\PR(\lambda_n)-\x} \right |
\leq \left( \sumn \frac{\left |\al_n^{(i)}\right |^2}{|\PR(\lambda_n)-\x|} \right)^{1/2}\left( \sumn \frac{\left |\beta^{(j)}_n\right |^2}{|\PR(\lambda_n)-\x|} \right)^{1/2}\hspace*{-0,2cm}<\infty
$$
since $\x \in \Omega_N(T)$. This shows that each $f_T^{(i,j)}$ is well defined and is continuous on $\gamma_\x^+$.

In order to prove that the map $$\xi \in \gamma_\x^+ \mapsto \frac{1}{\det(M_T(\xi))}$$ is well defined and is continuous, observe that  $\xi \in \gamma_\x^+ \mapsto \det(M_T(\xi))$ is well defined and it is continuous because each $f_T^{(i,j)}$ is. Likewise,  $\det(M_T(\xi)) \neq 0$ for every $\xi \in \gamma_\x^+$. Indeed, this latter fact follows for those $\xi \in \gamma_\x^+ \cap \sigma(T)$ by hypothesis (equation \eqref{ceros rango finito}), while for those $\xi \in \gamma_\x^+ \cap \rho(T)$ from Lemma \ref{autovalores rango finito}, since $\sigma_p(T)$ is empty and  the series
$$ \sumn \frac{|\al_n^{(k)}|^2}{|\lambda_n-z|^2}$$
is convergent for each $1\leq k \leq N$ and   $z \in \C\setminus \overline{\Lambda}$.  Accordingly, condition (b) in Theorem \ref{existencia rango finito} is fulfilled and hence $H_T(\overline{\inte(\gamma_\x^+)})$ is non zero as wished to prove.
\end{proof}

\begin{teorema}\label{teorema ceros rango finito} Let $T=D_\Lambda + \sumk u_k\otimes v_k \in (\RO)_N$, where $u=\sumn \al_n^{(k)}e_n$ and $v_k = \sumn \beta_n^{(k)}e_n$ are non zero vectors in $H$ for each $1\leq k\leq N$. Assume that there exist $\x_1$ and $\x_2$ in the relevant set $\Omega(T)$ such that $\x_1> \x_2$ and
$$\det(M_T(\x_1+iy)) \neq 0 \quad \textnormal{for every}\ \x_1+iy \in \sigma(T),$$
$$ \det(M_T(\x_2+iy)) \neq 0 \quad \textnormal{for every}\ \x_2+iy \in \sigma(T).$$
Then, $T$ has a non trivial closed hyperinvariant subspace.
\end{teorema}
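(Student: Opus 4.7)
The plan is to follow the blueprint of the proof of Theorem \ref{teorema}, adapting each step to the finite rank setting and replacing the scalar condition $1+f_T(\xi)\neq 0$ by $\det(M_T(\xi))\neq 0$. First I would perform the standard reductions: we may assume $\sigma_p(T)\cup \sigma_p(T^*)=\emptyset$, that $\sigma(T)$ is connected, and that $\sigma(T)$ is not contained in any vertical line, for in each of the opposite cases $T$ already admits a non trivial closed hyperinvariant subspace (from the membership in $(\RO)_N$ and the finite rank analogue of \cite[Corollary 6.14]{RR}). Under these reductions, $T$ satisfies the hypotheses of the previous theorem, which applied with $\x=\x_1$ immediately yields that $H_T(F_{\x_1}^{+})$ is a non zero spectral subspace.

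Next, I would transfer the hypothesis to $T^*$ and apply the same theorem with $\x=\x_2$ to produce a non zero spectral subspace of $T^*$. The key computation is to relate $M_{T^*}$ with $M_T$. Writing $T^*=D_{\Lambda}^{*}+\sum_{k=1}^{N} v_k\otimes u_k$ and expanding the entries of $M_{T^*}$, one checks directly that
$$f_{T^*}^{(i,j)}(z)=\sumn \frac{\beta_n^{(i)}\,\overline{\al_n^{(j)}}}{\overline{\lambda_n}-z}=\overline{f_T^{(j,i)}(\overline{z})},$$
and consequently $M_{T^*}(z)=\overline{M_T(\overline{z})^{T}}$, so
$$\det(M_{T^*}(z))=\overline{\det(M_T(\overline{z}))}.$$
Since $\sigma(T^*)=\{\overline{z}:z\in \sigma(T)\}$, the hypothesis $\det(M_T(\x_2+iy))\neq 0$ for every $\x_2+iy\in\sigma(T)$ translates into $\det(M_{T^*}(\x_2+iy))\neq 0$ for every $\x_2+iy\in\sigma(T^*)$. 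Moreover, the definition of the relevant set is symmetric in the $\al$'s and $\beta$'s (and $\PR(\lambda_n)=\PR(\overline{\lambda_n})$), so $\x_2\in \Omega_N(T^*)$.  Applying the previous theorem to $T^*$ yields that $H_{T^*}(G_{\x_2}^{-})$ is non zero, where $G_{\x_2}^{-}=\{\mu\in\sigma(T^*):\PR(\mu)\leq \x_2\}=(F_{\x_2}^{-})^{*}$.

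Finally, since $\sigma_p(T)\cup\sigma_p(T^*)=\emptyset$ both operators have the SVEP, and the closed sets $F_{\x_1}^{+}$ and $F_{\x_2}^{-}$ are disjoint because $\x_1>\x_2$. Corollary \ref{adjuntos} applied with $F_1=F_{\x_1}^{+}$ and $F_2=F_{\x_2}^{-}$ then gives that $\overline{H_T(F_{\x_1}^{+})}$ is a non trivial closed hyperinvariant subspace for $T$, as desired.

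The only nontrivial step is the identity $\det(M_{T^*}(z))=\overline{\det(M_T(\overline{z}))}$, which in the rank-one case reduces to the tautology $\overline{1+f_{T^*}(z)}=1+f_T(\overline{z})$ used in Theorem \ref{teorema}; here one must first verify the entrywise relation $f_{T^*}^{(i,j)}(z)=\overline{f_T^{(j,i)}(\overline{z})}$ and then use that $\det$ is invariant under transposition so the transpose conjugation does not alter the determinant up to complex conjugation. Everything else is a direct translation of the rank-one argument, with the finite-rank spectral subspace theorem replacing its rank-one counterpart.
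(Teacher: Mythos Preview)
Your proof is correct and follows essentially the same approach as the paper, which simply notes that the argument ``follows the lines of that of Theorem \ref{teorema} having in mind that for any $\x_2+iy \in \sigma(T^*)$ one has $\det(M_{T^*}(\x_2+iy)) = \overline{\det(M_T(\x_2-iy))}$.'' In fact you supply more detail than the paper does, making explicit the entrywise identity $f_{T^*}^{(i,j)}(z)=\overline{f_T^{(j,i)}(\overline{z})}$, the resulting relation $M_{T^*}(z)=(M_T(\overline{z}))^{*}$, the fact that $\x_2\in\Omega_N(T^*)$, and the disjointness of $F_{\x_1}^{+}$ and $F_{\x_2}^{-}$ needed to invoke Corollary~\ref{adjuntos}.
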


The proof of Theorem \ref{teorema ceros rango finito} follows the lines of that of Theorem \ref{teorema} having in mind that for any $\x_2+iy \in \sigma(T^*)$
$$\det(M_T^*(\x_2+iy)) = \det(M_T(\x_2-iy))$$
where $\x_2-iy \in \sigma(T)$. \\

The last ingredient in order to prove Theorem \ref{resultado rango finito} is a result in the spirit of
\cite[Theorem 2.5]{FJKP08} regarding quasisimilar operators in the class $(\RO)_N$.

\begin{teorema}\label{quasisimilares rango finito} Let $T=D_\Lambda + \sumk u_k\otimes v_k \in (\RO)_N$, where $u=\sumn \al_n^{(k)}e_n$ and $v_k = \sumn \beta_n^{(k)}e_n$ are non zero vectors in $H$ for each $1\leq k\leq N$. Suppose that $0 \in \Lambda' \setminus (\Lambda\cup\sigma_p(T)\cup\sigma_p(T^*))$ and let $D_\Lambda^{1/2}$ be any fixed square root of the normal operator $D_\Lambda.$ Assume that $u_k$ belongs to the range of $D_\Lambda^{1/2}$ for every $1\leq k\leq N.$ Then, $T$ and $\tilde{T}= D_\Lambda + \sumk (D_\Lambda^{-1/2}u_k)\otimes (D_\Lambda^{1/2})^*v_k$ are quasisimilar.
\end{teorema}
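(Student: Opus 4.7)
The plan is to generalize the rank-one argument of \cite[Theorem 2.5]{FJKP08} (stated as Theorem \ref{prop quasisimilares} in the preliminaries) by exhibiting two bounded operators $X, Y \in \EL(H)$ with $\ker X = \ker X^{*} = \ker Y = \ker Y^{*} = \{0\}$ satisfying $TX = X\tilde{T}$ and $YT = \tilde{T}Y$.

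The natural choice for the first intertwiner is $X = D_\Lambda^{1/2}$, exactly as in the rank-one setting. Using the elementary identities $A(w \otimes z) = (Aw) \otimes z$ and $(w \otimes z) A = w \otimes A^{*}z$, together with the hypothesis that $u_k$ belongs to the range of $D_\Lambda^{1/2}$ (so that $D_\Lambda^{1/2}(D_\Lambda^{-1/2} u_k) = u_k$ for every $1 \leq k \leq N$), a direct computation gives
$$ TX \;=\; D_\Lambda^{3/2} + \sum_{k=1}^N u_k \otimes (D_\Lambda^{1/2})^{*} v_k \;=\; X\tilde{T},$$
where the intermediate expression for $X\tilde{T}$ collapses term by term thanks to $D_\Lambda^{1/2} D_\Lambda^{-1/2} u_k = u_k$ and the commutation of $D_\Lambda$ with $D_\Lambda^{1/2}$. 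Injectivity and dense range of $X$ follow from $0 \notin \Lambda$, which forces each diagonal entry $\sqrt{\lambda_n}$ of $D_\Lambda^{1/2}$ in the basis $\mathcal{E}$ to be nonzero. This part of the argument carries over from the rank-one case essentially verbatim because $TX = X\tilde{T}$ is linear in the rank-one summands $u_k \otimes v_k$.

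The construction of $Y$ is the main obstacle. Even in the rank-one case, $Y$ cannot be chosen as a diagonal operator: the formal identity $\tilde{T} = D_\Lambda^{-1/2}\, T\, D_\Lambda^{1/2}$, which holds on a dense domain, would suggest $Y = D_\Lambda^{-1/2}$, and this is unbounded precisely because $0 \in \Lambda'$. Bypassing this requires the more delicate reasoning of \cite[Theorem 2.5]{FJKP08}, which I would adapt to handle the sum of $N$ rank-one perturbations. The additional difficulty in the finite rank setting is that distinct summands $u_j \otimes v_j$ and $u_k \otimes v_k$ with $j \neq k$ may interact, producing cross terms in the intertwining equation whose cancellation must be checked \emph{globally} rather than term by term. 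The matrix function $M_T(z)$ introduced earlier in this section encodes precisely these cross interactions, and I would expect it to play a role in controlling them; moreover, the hypothesis $T \in (\RO)_N$, which bounds the multiplicity of the eigenvalues of $D_\Lambda$ by $N$, together with $0 \in \Lambda' \setminus (\Lambda \cup \sigma_p(T) \cup \sigma_p(T^{*}))$, should preclude the spectral degeneracies that could obstruct the rank-one argument of \cite{FJKP08} from extending to this richer algebraic setting. Once $Y$ is in hand, its injectivity and dense range will follow from the same $0 \notin \Lambda$ argument that handled $X$.
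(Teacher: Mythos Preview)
Your treatment of $X = D_\Lambda^{1/2}$ is correct and matches the paper. The gap is in your handling of $Y$: you have misjudged where the difficulty lies and, as a result, failed to see that the second intertwiner is just as explicit and elementary as the first.

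The key observation you are missing is that the operator
\[
U := D_\Lambda^{-1/2}T \;=\; D_\Lambda^{1/2} + \sum_{k=1}^{N} (D_\Lambda^{-1/2}u_k)\otimes v_k
\]
is \emph{bounded}: the range of $T$ is contained in the range of $D_\Lambda^{1/2}$, since $D_\Lambda = (D_\Lambda^{1/2})^2$ and each $u_k$ lies in $\mathrm{ran}\,D_\Lambda^{1/2}$ by hypothesis. With this $U$ in hand one has the factorizations
\[
T = D_\Lambda^{1/2}\,U, \qquad \tilde{T} = U\,D_\Lambda^{1/2},
\]
and both intertwining relations follow at once:
\[
T\,D_\Lambda^{1/2} = D_\Lambda^{1/2}\,U\,D_\Lambda^{1/2} = D_\Lambda^{1/2}\,\tilde{T}, \qquad U\,T = U\,D_\Lambda^{1/2}\,U = \tilde{T}\,U.
\]
So $Y = U$ works. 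There are no cross terms to cancel, no global versus term-by-term distinction, and the matrix $M_T(z)$ plays no role whatsoever here; your speculation about it is entirely off target. The passage from $N=1$ to general $N$ is genuinely trivial: the formula for $U$ is linear in the rank-one summands, and the factorization $T = D_\Lambda^{1/2}U$, $\tilde{T} = UD_\Lambda^{1/2}$ is insensitive to $N$. The quasiaffinity of $D_\Lambda^{1/2}$ follows from $0\notin\Lambda$, and that of $U$ from the hypothesis $0\notin\sigma_p(T)\cup\sigma_p(T^*)$ together with these same factorizations.

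Your instinct that ``$Y = D_\Lambda^{-1/2}$ is unbounded'' was correct, but you stopped one step short: composing first with $T$ cures the unboundedness precisely because of the range hypothesis on the $u_k$.
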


The proof is similar to the one of \cite[Theorem 2.5]{FJKP08} and is included for the sake of completeness.

\begin{proof} By hypotheses, both operators $D_\Lambda^{1/2}$ and $T$ are quasiaffinities, namely,
$$\ker(D_\Lambda^{1/2}) = \ker((D_\Lambda^{1/2})^*)=\ker(T)=\ker(T^*) = \{0\}.$$
Likewise, $U:=D_\Lambda^{-1/2}T = D_\Lambda^{1/2}+ \sumk(D_\Lambda^{-1/2}u_k)\otimes v_k$ is a bounded operator being a quasiaffinity.

Observe that
$$ T= D_\Lambda^{1/2}U, \quad \tilde{T}= UD_\Lambda^{1/2},$$
so
$$TD_\Lambda^{1/2}= D_\Lambda^{1/2}UD_\Lambda^{1/2} = D_\Lambda^{1/2}\tilde{T}$$
and
$$UT = UD^{1/2}U = \tilde{T}U.$$
Consequently, $T$ and $\tilde{T}$ are quasisimilar operators and the statement follows.
\end{proof}

\medskip

Finally we are in position to prove Theorem \ref{resultado rango finito} as a byproduct of theorems  \ref{teorema ceros rango finito} and \ref{quasisimilares rango finito}.\\

\medskip

\noindent \textit{Proof of Theorem \ref{resultado rango finito}.} Without loss of generality, we may assume that $T \in (\RO)_N$, $\sigma(T)$ is a connected set not lying in any vertical line of $\mathbb{C}$ and $\sigma_p(T)\cup \sigma_p(T^*)=\emptyset$. Then \eqref{sumabilidad-1} along with lemmas \ref{lema logaritmo} and \ref{lema fx} yield that the relevant set $\Omega_N(T)$ contains infinitely many points.

Let $\x \in \Omega_N(T)$ be fixed and let us show that $\det(M_T(\x+iy)) \neq 0$ for every $\x+iy \in \sigma(T)$ following some of the ideas in the proof of the Theorem \ref{main result}.

Assume, by contradiction, that there exists $\xi = \x+iy \in \sigma(T)$ such that $\det(M_T(\xi)) = 0$ and consider the operator
$$T-\xi I = (D_\Lambda-\xi I) + \sumk u_k \otimes v_k,$$
which belongs to $(\RO)_N$.  As in the proof of Theorem \ref{main result}, we may assume that no eigenvalues of $D_\Lambda-\xi I$ lies on $(-\infty,0]$. In particular, denoting $\sqrt{\quad } $ the principal value of the square root, we fix $D_{\sqrt{\Lambda - \xi}}$ as a square root of $D_\lambda - \xi I$, denoted by $(D_\Lambda-\xi I)^{1/2}$.
Since $\x \in \Omega_N(T)$
$$ \sumn \frac{\left |\al_n^{(k)}\right |^2}{|\lambda_n-\xi|} \leq \sumn \frac{|\al_n^{(k)}|^2}{|\PR(\lambda_n)-\x|} < \infty, \qquad (1\leq k\leq N),$$
so the vectors $u_k$ belongs to the range of $(D_\Lambda-\xi I)^{1/2}$ for $1\leq k \leq N.$ Upon applying Theorem \ref{quasisimilares rango finito} it follows that $T-\xi I$ and
$$S := (D_\Lambda-\xi I) + \sumk (D_\Lambda-\xi I)^{-1/2}u_k\otimes (D_\Lambda^*-\overline{\xi}I)^{1/2}v_k$$
are quasisimilar operators. Then, $T^*-\overline{\xi}I$ and $S^*= D_\Lambda^*-\overline{\xi}I + \sumk (D_\Lambda^*-\overline{\xi}I)^{1/2}v_k\otimes  (D_\Lambda-\xi I)^{-1/2}u_k  $ are also quasisimilar what implies, in particular, that $\sigma_p(S^*) = \emptyset$.

On the other hand, since $\x \in \Omega_N(T)$ the vectors  $(D_\Lambda^*-\overline{\xi}I)^{1/2}v_k$ belong to the range of $(D_\Lambda^*-\overline{\xi}I)$ and hence, by Lemma \ref{autovalores rango finito}, the determinant $\det(M_{S^*}(0)) \neq 0$. Note that for any $1\leq i,j\leq N$
\begin{equation*}
    \begin{split}
        f_{S^*}^{(i,j)}(0) = \sumn \frac{(\overline{\lambda_n - \xi})^{1/2}\beta_n^{(i)}(\overline{\lambda_n-\xi})^{-1/2}\overline{\al_n^{(j)}}}{\overline{\lambda_n}-\overline{\xi}} = \sumn \frac{\beta_n^{(i)}\overline{\al_n^{(j)}}}{\overline{\lambda_n}-\overline{\xi}} = \overline{f_T^{(j,i)}(\xi)},
    \end{split}
\end{equation*}
so $M_{S^*}(0) = M_T^*(\xi)$. By assumption, $\det(M_T(\xi)) = 0$ so $\det(M_{S^*}(0)) = 0$ as well, what yields a contradiction.

Accordingly, $\det(M_T(\x+iy)) \neq 0$ for every $\x+iy\in \sigma(T)$ with $\x \in \Omega_N(T)$. Upon applying Theorem \ref{teorema ceros rango finito} the proof of Theorem \ref{resultado rango finito} follows. \hfill $\Box$

	\end{document}